\newtheorem{theorem}{Theorem}
\newtheorem{lemma}[theorem]{Lemma}
\newtheorem{corollary}[theorem]{Corollary}
\newtheorem{definition}[theorem]{Definition}
\long\def\symbolfootnote[#1]#2{\begingroup
\def\thefootnote{\fnsymbol{footnote}}\footnote[#1]{#2}\endgroup}
\newcommand{\la}{\lambda}
\newcommand{\La}{\Lambda}
\newcommand{\ris}[1]{\mathrm{ris}(#1)}
\newcommand{\des}[1]{\mathrm{des}(#1)}
\newcommand{\sg}{\sigma}
\newcommand{\cref}[1]{Corollary \ref{corollary:#1}}
\newcommand{\red}[1]{\mathrm{red}(#1)}
\newcommand{\Plap}[1]{\text{$P$-$\mathrm{nlap}$}(#1)}
\newcommand{\Pmch}[1]{\text{$P$-$\mathrm{mch}$}(#1)}
\newcommand{\Umch}[1]{\text{$\Upsilon$-$\mathrm{mch}$}(#1)}
\newcommand{\Ulap}[1]{\text{$\Upsilon$-$\mathrm{nlap}$}(#1)}
\newcommand{\fig}[2]{\begin{figure}[ht]
\centerline{\scalebox{1.25}{\epsfig{file=#1.eps}}}
\caption{#2}
\label{figure:#1}
\end{figure}}
\title{Patterns in column strict fillings of rectangular arrays.}
\author{
Johannes Harmse \\
\small Department of Mathematics\\[-0.8ex]
\small University of California, San Diego\\[-0.8ex]
\small La Jolla, CA 92093-0112. USA\\[-0.8ex]
\small \texttt{jharmse@math.ucsd.edu}
\and
Jeffrey Remmel\footnote{Partially supported by NSF grant DMS 0654060.} \\
\small Department of Mathematics\\[-0.8ex]
\small University of California, San Diego\\[-0.8ex]
\small La Jolla, CA 92093-0112. USA\\[-0.8ex]
\small \texttt{remmel@math.ucsd.edu}
\and
}
\date{\small Submitted: Date 1;  Accepted: Date 2;
 Published: Date 3.\\
\small MR Subject Classifications: 05A15, 05E05}
\begin{document}
\maketitle

\begin{abstract}
\noindent In this paper, we study pattern matching in the 
set $\mathcal{F}_{n,k}$ of 
fillings of the $k \times n$ rectangle with the integers  
$1, \ldots, kn$ such that the elements in any column 
increase from bottom to top.  Let $P$ be a column strict tableau 
of shape $2^k$. We say that a filling $F \in \mathcal{F}_{n,k}$ 
has $P$-match starting at $i$ if 
the elements of $F$ in columns $i$ and $i+1$ have the same relative order 
as the elements of $P$. We compute the  generating functions 
for the distribution of $P$-matches and nonoverlapping $P$-matches for 
various classes of standard tableaux of shape $2^k$. 
We say that 
a filling $F \in \mathcal{F}_{n,k}$ 
is {\it $P$-alternating} if there are 
$P$-matches  of $F$ starting at all odd positions but there 
are no $P$-matches of $F$ starting at even positions. We also 
compute the  generating functions for $P$-alternating elements 
of $\mathcal{F}_{n,k}$ for various classes of standard tableaux of 
shape $2^k$. 
\end{abstract}

\section{Introduction}

Let $\mathcal{F}_{n,k}$ denote the set of all fillings 
of a $k \times n$ rectangular array with the integers $1, \ldots, 
kn$ such that that the elements increase from bottom to top 
in each column. We let $(i,j)$ denote the cell in 
$i$-th row from the bottom and $j$-th column from the left of the $k \times n$ rectangle 
and we let $F(i,j)$ denote the element in cell $(i,j)$ of 
$F\in\mathcal{F}_{n,k}$. For example, the elements 
of $\mathcal{F}_{2,3}$ are pictured below.

\fig{22}{The elements of $\mathcal{F}_{2,3}$.}

It is easy to see that 
\begin{equation}\label{number}
|\mathcal{F}_{n,k}| = \frac{(kn)!}{(k!)^n}.
\end{equation}
That is, for each $F \in \mathcal{F}_{n,k}$, allowing all permutations of the elements in each column gives rise to
$(k!)^n$ fillings of the $k \times n$ rectangle with the 
numbers $1, \ldots, kn$. Since there are $(kn)!$ fillings of the $k \times n$ rectangle with the 
numbers $1, \ldots, kn$, (\ref{number}) easily follows.

Given a partition 
$\lambda = (\lambda_1, \ldots , \lambda_k)$ where 
$0 < \lambda_1 \leq \cdots \leq \lambda_k$, we let 
$F_\lambda$ denote the Ferrers diagram of $\lambda$, i.e. $F_\lambda$ is 
the set of left-justified rows of squares where the size 
of the $i$-th row, reading from top to bottom, is $\lambda_i$. Thus 
a $k \times n$ rectangular array corresponds to the Ferrers 
diagram corresponding to $n^k$. 
If $F \in \mathcal{F}_{n,k}$ and the integers are  increasing 
in each row, reading from left to right, then $F$ is a 
standard tableau of shape $n^k$. 
We let $St_{n^k}$ denote the set of all standard tableaux of shape 
$n^k$ and let $st_{n^k} =|St_{n^k}|$.  One can use the Frame-Robinson-Thrall 
hook formula \cite{FRT} to show that 
\begin{equation}
st_{n^k} = \frac{(kn)!}{\prod_{i=0}^{k-1} (i+n)\downarrow_n}
\end{equation}
where $(n)\downarrow_0 =1$ and $(n)\downarrow_k =n(n-1) \cdots (n-k+1)$ 
for $k > 0$.

The goal of this paper is to study pattern matching conditions in 
$\mathcal{F}_{n,k}$. Clearly, when $k=1$, we have $\mathcal{F}_{n,1} =S_n$, 
where $S_n$ is the symmetric group, so our 
results can be viewed as generalizations of results on pattern 
matching in $S_n$. Our long term goal is to extend 
the work of this paper to the 
study pattern matching conditions in standard tableaux of rectangular 
shape as well as pattern matching conditions in column strict tableaux  
of rectangular shapes over a fixed alphabet $A =\{1, \ldots, m\}$. 
The study of pattern matching conditions in $\mathcal{F}_{n,k}$ is 
considerably easier than the study of pattern matching conditions in standard 
tableaux or column strict tableax. Nevertheless, we shall see 
that the study of pattern matching conditions in $\mathcal{F}_{n,k}$
requires us to prove some interesting results on pattern matching 
conditions in standard tableaux. In particular, we 
shall generalize two classical results on permutations. Given 
a permutation $\sg = \sg_1 \ldots \sg_n \in S_n$, we let 
\begin{eqnarray*}
Rise(\sg) &=& \{i:\sg_i < \sg_{i+1}\} \ \mbox{and} \\
Des(\sg) &=&  \{i:\sg_i > \sg_{i+1}\}.
\end{eqnarray*}
We let $\ris{\sg} = |Rise(\sg)|$ and $\des{\sg} = |Des(\sg)|$. 
Then the generating function for the distribution of rises or descents 
in $S_n$ is given by 
\begin{equation}\label{risgf}
\sum_{n \geq 0} \frac{t^n}{n!} \sum_{\sg \in S_n} x^{\ris{\sg}} = 
\frac{1-x}{-x+e^{t(x-1)}},
\end{equation}
see Stanley \cite{Stan}.
A permutation $\sg =\sg_1 \ldots \sg_n \in S_n$ is alternating 
if 
$$\sg_1 < \sg_2 > \sg_3 < \sg_4 > \sg_5 < \cdots$$
 or, equivalently, 
if $Ris(\sg)$ equals the set of odd numbers which are less than 
$n$. Let $A_n$ denote the number of alternating permutations 
of $S_n$. Then Andr\'{e} \cite{Andre1}, \cite{Andre2} proved that 
\begin{equation}\label{alte}
\sec(t) = 1+ \sum_{n \geq 1} \frac{A_{2n}t^{2n}}{(2n)!}
\end{equation}
and 
\begin{equation}\label{alto}
\tan(t) = \sum_{n\geq 0} \frac{A_{2n+1}t^{2n+1}}{(2n+1)!}.
\end{equation}

If $F$ is any filling of a $k \times n$-rectangle with distinct positive 
integers such that elements in each column increase, 
reading from bottom to top, then 
we let $\red{F}$ denote the element of 
$\mathcal{F}_{n,k}$ which results from $F$ by replacing 
the $i$-th smallest element of $F$ by $i$. For example, Figure \ref{figure:red} demonstrates 
a filling, $F$, with its corresponding reduced filling, $\red{F}$.

\fig{red}{An example of $F\in\mathcal{F}_{3,4}$ and $\red{F}$.}

If $F \in \mathcal{F}_{n,k}$ and $1 \leq c_1 < \cdots < c_j \leq n$, then 
we let $F[c_1,\ldots,c_j]$ be the filling of the $k \times j$ rectangle 
where the elements in column $a$ of $F[c_1,\ldots,c_j]$ equal the elements 
in column $c_a$ in $F$ for $a = 1, \ldots, j$.   We can 
then extend the usual pattern matching definitions from permutations 
to elements of  $\mathcal{F}_{n,k}$ as follows. 
\begin{definition}\label{def1} Let $P$ be an element of 
$\mathcal{F}_{j,k}$ and $F \in \mathcal{F}_{n,k}$ where $j \leq n$. 
Then we say
\begin{enumerate}
\item  $P$ {\bf occurs} in $F$ if there are
$1 \leq i_1 < i_2 < \cdots < i_j \leq n$ such that
$\red{F[i_1, \ldots, i_j]} = P$,

\item $F$ {\bf avoids} 
$P$ if there is no occurrence of $P$ in $F$, and   

\item there is a {\bf $P$-match  in $F$ starting at
position $i$} if $\red{F[i,i+1, \ldots, i+j-1]} = P$.
\end{enumerate}
\end{definition}
Again, when $k=1$, then $\mathcal{F}_{n,1}=S_n$, where $S_n$ is the symmetric 
group, and our definitions reduce to the standard definitions that 
have appeared in the pattern matching literature. We note 
that Kitaev, Mansour, and Vella \cite{K1} have studied pattern matching 
in matrices which is a more general setting than the one we are considering 
in this paper.

We let $\Pmch{F}$ denote the 
number of $P$-matches in $F$ and let 
$\Plap{F}$ be the maximum number of nonoverlapping $P$-matches in
$F$, where two $P$-matches are said to overlap if they share a common 
column. For example, if we consider the fillings $P \in \mathcal{F}_{3,3}$ and 
$F,G \in \mathcal{F}_{6,3}$ shown in Figure \ref{figure:Pmatch}, 
then it is easy to see that there are no $P$-matches in $F$ 
but there is an occurrence of $P$ in $F$, since 
$\red{F[1,2,5]} =P$. Also, there are 2 $P$-matches in $G$ starting at 
positions 1 and 2, respectively, so $\Pmch{G} =2$ and 
$\Plap{G} =1$.

\fig{Pmatch}{Examples of $P$-matches and occurrences of $P$.}

One can easily extend these notions to sets of elements 
of $\mathcal{F}_{j,k}$.  
That is, suppose that $\Upsilon \subseteq \mathcal{F}_{j,k}$. 
Then $F \in \mathcal{F}_{n,k}$ has an $\Upsilon$-match at place $i$
provided $\red{F[i,i+1, \ldots,i+j-1]} \in \Upsilon$. Let $\Umch{F}$ and $\Ulap{F}$ be 
the number of $\Upsilon$-matches and nonoverlapping
$\Upsilon$-matches in $F$, respectively.

To generalize (\ref{risgf}), (\ref{alte}), and (\ref{alto}), one 
must first generalize the notion of a rise in a permutation 
$\sg\in S_n$. As one 
may view a rise in $\sg$ as a pattern match of the permutation $12$, a natural  
analogue of a rise in $\mathcal{F}_{n,k}$ is a   
pattern match of one or more patterns, $\Upsilon\subseteq\mathcal{F}_{2,k}$, which have strictly increasing 
rows, i.e. a match of a standard tableaux of shape $k \times 2$.
Consequently, there are many analogues of rises in our setting. 
For example, when $k =2$, the two standard tableaux of shape $2^2$, 
which we denote $P_1^{(2,2)}$ and $P_2^{(2,2)}$, are shown in 
Figure \ref{figure:2standard}. 
Thus, when considering the analogues of a
rise in $\mathcal{F}_{n,2}$, we should 
study matches of the patterns $P_1^{(2,2)}$ and $P_2^{(2,2)}$ as well as matches of 
the set of patterns $St_{2^2} = \{P_1^{(2,2)},P_2^{(2,2)}\}$.   
 
\fig{2standard}{The standard tableaux of shape $2^2$.}

If $P \in \mathcal{F}_{2,k}$, then 
we define $Full_{n}^{P}$ to be the set of 
$F \in \mathcal{F}_{n,k}$ with $\Pmch{F} = n-1$, i.e. the set of 
$F \in \mathcal{F}_{n,k}$ with the property that 
there are
$P$-matches in $F$ starting at positions $1,2, \ldots, n-1$.  
We let $full_n^P = |Full_n^P|$, and by convention, define $full_{1}^{P}=1$.
For example, if $P$ is the element of $\mathcal{F}_{2,k}$ 
that has the elements  
$1, \ldots, k$ in the first column and the elements $k+1, \ldots, 2k$ 
in the second column, then it is easy to see that 
$full_{n}^{P} =1$ for all $n \geq 1$ since the only 
element of $F \in \mathcal{F}_{n,k}$ with $\Pmch{F}=n-1$ 
has the entries $(i-1)k+1, \ldots, (i-1)k+k$ in the $i$-th column 
for $i =1, \ldots, n$.  More generally, if $\Upsilon$ is a subset of $\mathcal{F}_{2,k}$, then 
we define $Full_{n}^{\Upsilon}$ to be the set of 
$F \in \mathcal{F}_{n,k}$ such that $\Umch{F} = n-1$ 
and let $full_n^\Upsilon = |Full_n^\Upsilon|$.
Again, we use the convention that $full_{1}^{\Upsilon}=1$.
For 
example, if $\Upsilon = St_{2^k}$ is the set of all standard tableaux of shape 
$2^k$, then it easy to see that a $F \in \mathcal{F}_{n,k}$ has 
$\Umch{F} = n-1$ if and only if $F$ is standard tableaux of shape $n^k$, 
so 
$$full_n^{\Upsilon}=st_{n^k} = 
\frac{(kn)!}{\prod_{i=0}^{k-1} (i+n)\downarrow_n}.$$
 
If $\Upsilon$ is a subset of $\mathcal{F}_{2,k}$, then we shall be  
interested in the following three generating functions:  
\begin{eqnarray*}
D^{\Upsilon}(x,t) &=& 1+ \sum_{n \geq 1} \frac{t^n}{(kn)!} \sum_{F \in \mathcal{F}_{n,k}} 
x^{\Umch{F}}, \\
A^{\Upsilon}(t) &=& D^{\Upsilon}(0,t) = 1+ \sum_{n \geq 1} 
\frac{A^{\Upsilon}_{n,k}t^n}{(kn)!}, \ \mbox{and} \\
N^{\Upsilon}(x,t) &=& 1+ \sum_{n \geq 1} \frac{t^n}{(kn)!} \sum_{F \in \mathcal{F}_{n,k}} x^{\Ulap{F}} 
\end{eqnarray*}
where $A^{\Upsilon}_{n,k}$ is equal to the number of 
$F \in \mathcal{F}_{n,k}$ that have no $\Upsilon$-matches. 
If $\Upsilon$ consists of a single element $P$, then 
we write $D^P(x,t)$, $A^P(t)$, and $N^P(x,t)$ for 
$D^{\Upsilon}(x,t)$, $A^{\Upsilon}(t)$, and $N^{\Upsilon}(x,t)$, respectively. 
Clearly, the generating functions $D^{\Upsilon}(x,t)$ where 
$\Upsilon \subseteq St_{n^k}$ can be viewed as 
analogues of the generating function for rises as described 
in (\ref{risgf}).

We shall prove the following general theorems concerning the 
generating functions $D^{\Upsilon}(x,t)$, $A^{\Upsilon}(t)$, and 
$N^{\Upsilon}(x,t)$.

\begin{theorem}\label{thm:D} For all $\Upsilon \subseteq \mathcal{F}_{2,k}$, 
\begin{equation}\label{eq:D}
D^{\Upsilon}(x,t) = \frac{1-x}{1-x + 
\sum_{n \geq 1} \frac{((x-1)t)^n}{(kn)!} full^{\Upsilon}_{n}}.
\end{equation}
\end{theorem}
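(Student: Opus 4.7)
The plan is to use the standard sign-reversing expansion $x^{\Umch{F}} = \bigl(1 + (x-1)\bigr)^{\Umch{F}} = \sum_{S \subseteq M(F)} (x-1)^{|S|}$, where $M(F) \subseteq \{1, \dots, n-1\}$ denotes the set of positions at which $F \in \mathcal{F}_{n,k}$ has a $\Upsilon$-match. Summing over $F$ and swapping the order of summation rewrites the inner sum of $D^{\Upsilon}(x,t)$ as
$$\sum_{F \in \mathcal{F}_{n,k}} x^{\Umch{F}} = \sum_{(F, S):\, S \subseteq M(F)} (x-1)^{|S|},$$
so the problem reduces to counting pairs $(F, S)$ weighted by $(x-1)^{|S|}$.

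Next I would give a block decomposition. View each $i \in S$ as gluing columns $i$ and $i+1$ together; then $S$ partitions $\{1, \dots, n\}$ into $r$ maximal runs of consecutive columns of sizes $m_1, \dots, m_r$ with $m_1 + \cdots + m_r = n$ and $|S| = n - r$. Because $\Upsilon$-matches at position $i$ depend only on the entries in columns $i$ and $i+1$, the condition $S \subseteq M(F)$ is equivalent to requiring that, for each block of size $m_i$, the restriction of $F$ to those columns reduces to an element of $Full^{\Upsilon}_{m_i}$, with no constraint at the boundaries between blocks. Conversely, every choice of composition $(m_1, \dots, m_r)$ of $n$, multinomial distribution of the labels $1,\dots,kn$ into groups of sizes $km_1, \dots, km_r$, and pattern in $Full^{\Upsilon}_{m_i}$ for each block yields a unique pair $(F,S)$. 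Thus
$$\sum_{F \in \mathcal{F}_{n,k}} x^{\Umch{F}} = \sum_{r \geq 0}\, \sum_{\substack{m_1 + \cdots + m_r = n \\ m_i \geq 1}} \frac{(kn)!}{(km_1)!\cdots(km_r)!}\prod_{i=1}^{r} full^{\Upsilon}_{m_i}\,(x-1)^{m_i - 1}.$$

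Dividing by $(kn)!$, multiplying by $t^n$, and summing over $n \geq 0$ (with the $n = r = 0$ term giving $1$), the right-hand side factors as a geometric series in $r$, yielding
$$D^{\Upsilon}(x,t) = \sum_{r \geq 0} B(x,t)^r = \frac{1}{1 - B(x,t)}, \qquad B(x,t) = \sum_{m \geq 1} \frac{full^{\Upsilon}_{m}\, t^{m}\, (x-1)^{m-1}}{(km)!}.$$
Multiplying numerator and denominator by $(x-1)$ pulls one factor of $(x-1)$ inside the sum and produces the target denominator $1-x + \sum_{n \geq 1} \frac{((x-1)t)^n}{(kn)!} full^{\Upsilon}_{n}$, completing the derivation.

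The step requiring the most care is the block decomposition: one must verify that, for a pair $(F, S)$ with $S \subseteq M(F)$, the restrictions of $F$ to distinct blocks really are independent of one another and that matches are correctly preserved under reduction. Both points rest on the fact that a $\Upsilon$-match is a two-column condition determined by relative order, so reducing the entries of a block leaves every internal match intact, while positions at the seams between blocks lie outside $S$ and are unconstrained. Once this is in place, the remaining work is the routine exponential-generating-function bookkeeping sketched above.
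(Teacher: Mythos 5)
Your proposal is correct, and it reaches the theorem by a genuinely more elementary route than the paper. The paper defines a ring homomorphism $\Gamma$ on the ring of symmetric functions by $\Gamma(e_n) = \frac{(-1)^{n-1}}{(kn)!}full^{\Upsilon}_n(x-1)^{n-1}$, expands $\Gamma(h_n)$ over brick tabloids via the identity $h_n = \sum_{\lambda\vdash n}(-1)^{n-\ell(\lambda)}B_{\lambda,(n)}e_\lambda$, interprets the resulting signed sum as a weighted count of ``filled labeled brick tabloids'' whose columns carry labels from $\{x,1,-1\}$, and then kills the $-1$ labels with a sign-reversing involution whose fixed points are exactly the fillings $F$ weighted by $x^{\Umch{F}}$; the generating function identity then falls out of $H(t)=1/E(-t)$. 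Your derivation reaches the identical intermediate formula
\begin{equation*}
\sum_{F \in \mathcal{F}_{n,k}} x^{\Umch{F}} \;=\; \sum_{r\geq 1}\ \sum_{\substack{m_1+\cdots+m_r=n\\ m_i\geq 1}} \binom{kn}{km_1,\ldots,km_r}\prod_{i=1}^{r} full^{\Upsilon}_{m_i}(x-1)^{m_i-1},
\end{equation*}
but you get there in the opposite direction: the expansion $x^{\Umch{F}}=\sum_{S\subseteq M(F)}(x-1)^{|S|}$ does algebraically, up front, exactly the cancellation that the paper's involution performs combinatorially (your subsets $S$ are the paper's sets of columns labeled $x$ or $-1$, and your maximal runs are its bricks), after which only the straightforward bijection between pairs $(F,S)$ and triples (composition, set partition, choice of element of $Full^{\Upsilon}_{m_i}$ per block) is needed. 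Your key verifications are the right ones: that $S\subseteq M(F)$ (not equality) leaves the seams unconstrained, and that reduction preserves matches since a $\Upsilon$-match is a relative-order condition on two adjacent columns. What your approach buys is a self-contained proof with no symmetric function machinery; what the paper's approach buys is uniformity, since the same homomorphism $\Gamma$, lightly modified and combined with the functions $p_{n,\nu}$, also yields Theorems \ref{thm:EAx} and \ref{thm:OAx}, whereas your method would need a separate (though analogous) decomposition argument for the alternating results.
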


\begin{theorem}\label{thm:N} For all $\Upsilon \subseteq \mathcal{F}_{2,k}$, 
\begin{equation}\label{eq:N}
N^{\Upsilon}(x,t) = \frac{A^{\Upsilon}(t)}{1-x(1+(\frac{t}{k!}-1)A^{\Upsilon}(t))}.
\end{equation}
\end{theorem}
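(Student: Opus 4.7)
The plan is to prove Theorem~\ref{thm:N} via a greedy peeling decomposition of each $F \in \mathcal{F}_{n,k}$. Read $F$ from left to right, and at each stage look at the two leftmost remaining columns: if they form a $\Upsilon$-match, peel them off together as an \emph{$M$-block}; otherwise peel off the leftmost column alone as a \emph{$C$-block}; continue until $F$ is empty. This produces a canonical sequence of $C$- and $M$-blocks. My first task is to show that the number $j$ of $M$-blocks equals $\Ulap{F}$. Clearly $\Ulap{F} \ge j$, and for the reverse inequality I will use a standard exchange argument: if $p_1 < \cdots < p_j$ are the $M$-block positions and $q_1 < \cdots < q_{j'}$ is any other non-overlapping collection of $\Upsilon$-match positions, an easy induction shows $p_i \le q_i$ (the greedy choice at every step is the leftmost available); if $j' > j$, then $q_{j+1} \ge p_j + 2$ would witness a match position beyond $p_j$ that the peeling had been unable to find, a contradiction.

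The second step is to extract the recursive structure. Define an \emph{$L$-structure} to be a filling $F \in \mathcal{F}_{m,k}$, $m \ge 2$, whose only $\Upsilon$-match occurs at position $m-1$; equivalently, this is exactly the prefix of $F$ peeled up to and including the very first $M$-block. Removing this prefix $L_1$ and reducing the remainder produces a filling $F'$ with $\Ulap{F'} = \Ulap{F} - 1$, and this gives a bijection between fillings with $\Ulap \ge 1$ and pairs $(L_1, F')$. Since our generating functions are normalized by $t^n/(kn)!$, concatenation of reduced fillings of sizes $n_1, n_2$ corresponds to ordinary multiplication in $t$: the $\binom{k(n_1+n_2)}{kn_1}$ ways of distributing the $k(n_1+n_2)$ labels between the two pieces are exactly the coefficients produced by the EGF product. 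Letting $L(t)$ denote the EGF of $L$-structures, I therefore obtain
\begin{equation*}
N^{\Upsilon}(x,t) \;=\; A^{\Upsilon}(t) \;+\; x\, L(t)\, N^{\Upsilon}(x,t),
\end{equation*}
equivalently $N^{\Upsilon}(x,t) = A^{\Upsilon}(t) / (1 - xL(t))$.

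The final step is to evaluate $L(t)$. An $L$-structure of size $m$ is a filling in $\mathcal{F}_{m,k}$ such that $F[1,\ldots,m-1]$ is $\Upsilon$-avoiding and a $\Upsilon$-match occurs at position $m-1$. Fillings satisfying only the first condition number $\binom{km}{k}\,A^{\Upsilon}_{m-1,k}$: choose the $k$ entries of column $m$ in $\binom{km}{k}$ ways, then fill the first $m-1$ columns with the remaining labels as any $\Upsilon$-avoiding filling. Among these, the ones \emph{without} a match at position $m-1$ are precisely the $\Upsilon$-avoiding fillings of $\mathcal{F}_{m,k}$, of which there are $A^{\Upsilon}_{m,k}$. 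Hence the number of $L$-structures of size $m$ is $\binom{km}{k}A^{\Upsilon}_{m-1,k} - A^{\Upsilon}_{m,k}$, and using $\binom{km}{k}/(km)! = 1/((k(m-1))!\,k!)$ I can sum to
\begin{equation*}
L(t) \;=\; \tfrac{t}{k!}\,A^{\Upsilon}(t) \;-\; \bigl(A^{\Upsilon}(t) - 1\bigr) \;=\; 1 + \Bigl(\tfrac{t}{k!} - 1\Bigr) A^{\Upsilon}(t).
\end{equation*}
Substituting into the recursion yields the claimed formula. The main subtlety I anticipate is the optimality of leftmost-greedy in the first paragraph: matches straddling the seam between two successive $L$-structures are invisible to the peeling, and one must confirm that they can never be combined with other matches to produce a larger non-overlapping collection --- which is exactly what the exchange argument settles.
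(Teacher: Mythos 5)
Your proof is correct and follows essentially the same route as the paper: your $L$-structures are exactly the paper's class $\mathcal{E}^{\Upsilon}_{m,k}$ of fillings whose unique $\Upsilon$-match sits at the end, your count $\binom{km}{k}A^{\Upsilon}_{m-1,k}-A^{\Upsilon}_{m,k}$ is the paper's identity $\binom{k(n+1)}{k}A^{\Upsilon}_{n,k}=A^{\Upsilon}_{n+1,k}+E^{\Upsilon}_{n+1,k}$, and the geometric-series recursion $N^{\Upsilon}=A^{\Upsilon}/(1-xB^{\Upsilon})$ is identical. The one point where you go beyond the paper is the exchange argument justifying that the leftmost-greedy cutting realizes $\Ulap{F}$ exactly, which the paper takes for granted; that addition is sound and worth keeping.
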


Theorem \ref{thm:D} is proved by applying a
ring homomorphism defined on the ring, $\Lambda$, of symmetric functions
over infinitely many variables,  $x_1,x_2, \ldots$,  to a
simple symmetric function identity. There has been a long line of research,
\cite{br}, \cite{b}, \cite{b2}, \cite{L},
\cite{LR}, \cite{MenRem1}, \cite{MenRem2}, \
\cite{Book}, \cite{RRW}, \cite{Wag}, 
that shows that a large number of generating functions for 
permutation statistics
can be obtained by applying homomorphisms defined on $\Lambda$ 
to simple symmetric function
identities. Theorem \ref{thm:N} is an 
analogue of a result of Kitaev \cite{K}.

By our remarks above, we have the following corollaries. 

\begin{corollary} Let $St_{2^k}$ denote the set of 
standard tableaux of shape $2^k$, then 

\begin{equation}\label{eq:DSt}
D^{St_{2^k}}(x,t) = \frac{1-x}{1-x +(x-1)\frac{t}{k!} +
\sum_{n \geq 2} \frac{((x-1)t)^n}{\prod_{i=1}^k (n+i-1)\downarrow_n}}
\end{equation}
and
\begin{equation}\label{eq:ASt}
A^{St_{2^k}}(t) = \frac{1}{1 -\frac{t}{k!} + 
\sum_{n \geq 2} \frac{(-t)^n}{\prod_{i=1}^k (n+i-1)\downarrow_n}}.
\end{equation}

\end{corollary}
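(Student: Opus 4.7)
The plan is to derive this corollary as a direct application of Theorem \ref{thm:D}, with $\Upsilon = St_{2^k}$. The only ingredient needed beyond Theorem \ref{thm:D} is an explicit formula for $full^{St_{2^k}}_n$, and this is already identified in the remarks preceding the theorem statements: a filling $F \in \mathcal{F}_{n,k}$ has an $St_{2^k}$-match starting at every position $1,2,\ldots,n-1$ precisely when each pair of adjacent columns is row-increasing, which forces the rows of $F$ to be strictly increasing, i.e.\ $F$ is a standard tableau of shape $n^k$. Hence
\[
full^{St_{2^k}}_n \;=\; st_{n^k} \;=\; \frac{(kn)!}{\prod_{i=0}^{k-1}(i+n)\downarrow_n} \;=\; \frac{(kn)!}{\prod_{i=1}^{k}(n+i-1)\downarrow_n}
\]
for all $n \geq 1$, where the equality with the convention $full^{St_{2^k}}_1 = 1$ is automatic since the denominator $\prod_{i=1}^{k} i\downarrow_1 = k!$ matches $(k\cdot 1)!$.

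Next, I would substitute this formula into \eqref{eq:D}. The denominator of the expression in Theorem \ref{thm:D} becomes
\[
1 - x + \sum_{n \geq 1} \frac{((x-1)t)^n}{(kn)!}\, st_{n^k}.
\]
For $n = 1$ the summand is $\frac{(x-1)t}{k!}\cdot 1 = (x-1)\frac{t}{k!}$, which I would pull out of the sum. For $n \geq 2$ the factor $(kn)!$ in the denominator cancels against the numerator of the hook-formula expression for $st_{n^k}$, leaving exactly $\frac{((x-1)t)^n}{\prod_{i=1}^k (n+i-1)\downarrow_n}$. This yields \eqref{eq:DSt}. To obtain \eqref{eq:ASt}, I would simply set $x = 0$ in \eqref{eq:DSt}, using the definition $A^{\Upsilon}(t) = D^{\Upsilon}(0,t)$.

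Since each step is either a substitution or a direct algebraic simplification, there is no genuine obstacle here; the content of the corollary is really contained in (a) Theorem \ref{thm:D}, (b) the combinatorial observation that forces $F$ to be a standard rectangular tableau, and (c) the Frame--Robinson--Thrall hook formula already recorded in the introduction.
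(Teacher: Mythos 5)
Your proposal is correct and matches the paper's own (implicit) argument: the paper derives this corollary directly from Theorem \ref{thm:D} together with the observation, made in the introduction, that $full_n^{St_{2^k}} = st_{n^k} = (kn)!/\prod_{i=0}^{k-1}(i+n)\downarrow_n$, which after reindexing is exactly the hook-formula expression you use. Your handling of the $n=1$ term and the cancellation of $(kn)!$ for $n\geq 2$ is the same routine algebra the paper leaves to the reader.
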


\begin{corollary} Let $P$ be the element of $St_{2^k}$ that 
has $1, \ldots, k$ in the first column, then 

\begin{equation}\label{eq:DSSt}
D^{P}(x,t) = \frac{1-x}{1-x +(x-1)\frac{t}{k!} +
\sum_{n \geq 2} \frac{((x-1)t)^n}{(kn)!}}
\end{equation}
and
\begin{equation}\label{eq:ASSt}
A^{P}(t) = \frac{1}{1 -\frac{t}{k!}+ 
\sum_{n \geq 2} \frac{(-t)^n}{(kn)!}}.
\end{equation}

\end{corollary}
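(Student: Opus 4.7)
The plan is to apply Theorem \ref{thm:D} directly, once we identify $full_n^P$ for the specific pattern $P$. The key combinatorial observation, already noted earlier in the excerpt, is that $full_n^P = 1$ for every $n \geq 1$. To verify this, I would argue by induction on $n$: if $F \in \mathcal{F}_{n,k}$ has $\Pmch{F} = n-1$, then in particular $\red{F[1,2]} = P$, which forces column $1$ of $F$ to contain exactly $\{1,2,\ldots,k\}$ (since $P$'s first column holds the smallest $k$ values among its $2k$ entries, and column $1$ of $F$ already holds the smallest $k$ entries among the first two columns; moreover the union of the first two columns must itself be $\{1,\ldots,2k\}$, else some later column would contain a smaller value violating a subsequent $P$-match). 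The induction hypothesis applied to $F[2,\ldots,n]$ (after subtracting $k$ from all entries) then pins down every column of $F$ uniquely.

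Substituting $full_n^P = 1$ into equation (\ref{eq:D}) yields
\begin{equation*}
D^P(x,t) \;=\; \frac{1-x}{\,1-x+\sum_{n \geq 1}\frac{((x-1)t)^n}{(kn)!}\,}.
\end{equation*}
Splitting off the $n=1$ term of the series as $(x-1)\tfrac{t}{k!}$ and grouping the remaining terms as $\sum_{n \geq 2}\frac{((x-1)t)^n}{(kn)!}$ reproduces formula (\ref{eq:DSSt}) exactly. Specializing to $x=0$, the $n=1$ term becomes $-t/k!$ and the tail becomes $\sum_{n\geq 2}(-t)^n/(kn)!$, producing (\ref{eq:ASSt}).

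The only nontrivial step is the combinatorial claim $full_n^P = 1$, and even that is quite light: the rigid structure of $P$ (smallest $k$ values in the left column, largest $k$ values in the right column) leaves no flexibility in chaining consecutive matches together. Once this is in hand, the corollary is an immediate algebraic consequence of Theorem \ref{thm:D}, requiring only the separation of the $n=1$ summand from the denominator.
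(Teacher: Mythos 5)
Your proposal is correct and follows the paper's own route exactly: the paper observes in the introduction that $full_n^P=1$ for this $P$ (since chained $P$-matches force column $i$ to contain $(i-1)k+1,\ldots,ik$), substitutes into Theorem \ref{thm:D}, separates the $n=1$ summand, and sets $x=0$ to get $A^P(t)$. Your inductive justification of $full_n^P=1$ is a sound elaboration of what the paper treats as immediate.
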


We have developed similar results for other 2-column patterns; 
the key is to be able to compute $full^P_{n}$. For example,  we 
can prove the following. 

\begin{theorem}

\begin{enumerate}
\item If $P_2^{(2,2)} = \begin{array}{|c|c|}
 \hline 3 & 4 \\
 \hline 1 & 2 \\
 \hline
 \end{array}$,
then $full^{P_2^{(2,2)}}_{n} =C_{n-1}$ where 
$C_n = \frac{1}{n+1}\binom{2n}{n}$ is the $n$-th Catalan number.

\item If 
$P_1^{(2,2,2)} = \begin{array}{|c|c|}
 \hline 4 & 6 \\ 
 \hline 2 & 5 \\
 \hline 1 & 3 \\
 \hline
 \end{array}$, 
then $full^{P_1^{(2,2,2)}}_{n} =\frac{1}{2n+1}\binom{3n}{n}$.
\end{enumerate}

\end{theorem}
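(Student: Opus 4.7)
The plan is to realize each $Full_n^P$ as the set of linear extensions of an explicit poset built from the $P$-match inequalities, and then to biject those extensions with a standard Catalan-family of lattice paths.

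For the first claim I would write the columns of $F \in \mathcal{F}_{n,2}$ as $(a_i,b_i)$ with $a_i < b_i$. A $P_2^{(2,2)}$-match at position $i$ is equivalent to the chain $a_i < a_{i+1} < b_i < b_{i+1}$, so requiring matches at every position $i=1,\dots,n-1$ forces $a_1 < a_2 < \dots < a_n$, $b_1 < b_2 < \dots < b_n$, together with the cross-condition $a_{i+1} < b_i$. Reading the entries $1,2,\dots,2n$ of $F$ in increasing order and recording $U$ when an entry sits in the bottom row and $D$ when it sits in the top row produces a word $w \in \{U,D\}^{2n}$; the row-monotonicity conditions are equivalent to $w$ being a Dyck path of semilength $n$, while the extra constraint $a_{i+1}<b_i$ is equivalent to the path returning to height zero only at its endpoints. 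These prime Dyck paths are in well-known bijection with arbitrary Dyck paths of semilength $n-1$ by stripping the initial $U$ and the final $D$, giving $full_n^{P_2^{(2,2)}} = C_{n-1}$.

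For the second claim I would apply the same strategy with three rows: writing the columns of $F \in \mathcal{F}_{n,3}$ as $(b_i,m_i,t_i)$, a $P_1^{(2,2,2)}$-match at position $i$ is encoded by the chain of six strict inequalities among the entries of columns $i$ and $i+1$ dictated by the rank pattern of $P_1^{(2,2,2)}$. Combining these for $i=1,\dots,n-1$ should reveal a forced subchain on the lower two rows together with insertion-style lower bounds restricting where the top-row entries $t_i$ may be placed relative to that chain. I would then count the resulting non-decreasing sequences of insertion positions, either by direct generating-function manipulation or by exhibiting a cycle-lemma bijection with ternary Dyck paths, arriving at the Fuss--Catalan number $\frac{1}{2n+1}\binom{3n}{n}$. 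The main obstacle will be this second case: carefully extracting the ``forced $b,m$-chain plus constrained $t$-insertion'' picture and matching its combinatorics to a known Fuss--Catalan model; once that is set up, Part 1 is essentially immediate from the prime-Dyck-path encoding.
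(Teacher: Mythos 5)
Your argument for part (1) is correct and is essentially the proof the paper gives: the paper also passes to lattice paths by recording, value by value, whether an entry sits in the bottom or the top row (its map $\Theta$ labels the interior steps $p_2,\dots,p_{2n-3}$ of a path of length $2(n-1)$ with the values $3,\dots,2n-2$, which is exactly your reading word with the outermost $U$ and $D$ removed), and it verifies the same two implications --- that strict positivity of the path forces the cross-condition, and that a first return to the axis at an interior point produces a column $j$ with $F(1,j+1)>F(2,j)$. Your packaging of this via prime Dyck paths of semilength $n$ is a clean equivalent formulation, and part (1) of your proposal is complete.

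Part (2) is where the proposal has a genuine gap, in two respects. First, what you have written is a plan rather than a proof: the step you yourself flag as ``the main obstacle'' --- extracting the forced chain together with the constrained insertions and matching the resulting count to a Fuss--Catalan model --- is the entire content of the claim, and nothing in the proposal carries it out. (The present paper does not prove part (2) here either: Section~5 reduces the analogous claim to counting linear extensions of the poset $\mathcal{W}_{Q,n}$ attached to a non-degenerate standard tableau $Q$ of shape $2^3$ and defers the recursion to a subsequent paper.) Second, and more seriously for your intended strategy: if you actually write out the six inequalities for the tableau as printed (first column $1,2,4$ and second column $3,5,6$, read bottom to top), a match at position $i$ gives the chain $F(1,i)<F(2,i)<F(1,i+1)<F(3,i)<F(2,i+1)<F(3,i+1)$, and the chains at $i$ and $i+1$ concatenate via $F(3,i)<F(2,i+1)<F(1,i+2)$ into a total order on all $3n$ cells. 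Hence for this tableau $full_n^{P_1^{(2,2,2)}}=1$ for every $n$ --- it is degenerate in the sense of Theorem~\ref{degenerate} --- so the ``forced subchain plus free top-row insertions'' picture you are hoping to exploit does not exist for it. (Note also that $full_2^{P}=1$ for \emph{every} single pattern $P$, whereas $\tfrac{1}{5}\binom{6}{2}=3$, so the displayed tableau and the indexing of the stated formula cannot both be taken at face value; the Fuss--Catalan count belongs to one of the genuinely non-degenerate standard tableaux of shape $2^3$, with an index shift analogous to the $C_{n-1}$ of part (1).) To complete part (2) you would first need to identify the correct tableau and then actually perform the linear-extension count, neither of which the proposal does.
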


Let $\mathbb{P}$ denote the set of positive integers $\{1,2, \ldots \}$, 
$\mathbb{O}= \{1,3,5,7, \ldots \}$ denote the set of odd numbers in $\mathbb{P}$, and 
$\mathbb{E}= \{2,4,6,8,\ldots \}$ denote the set of even numbers in $\mathbb{P}$. 
For $n\in\mathbb{P}$, 
let $[n] = \{1, \ldots, n\}$, $\mathbb{O}_n = [n] \cap \mathbb{O}$, and 
$\mathbb{E}_n = [n] \cap \mathbb{E}$. Now suppose that $\Upsilon$ is a subset 
of $\mathcal{F}_{2,k}$ and $F \in \mathcal{F}_{n,k}$. Then 
we let 
\begin{enumerate}
\item $S^{\Upsilon}(F)$ denote the set of $i$ such that 
there is an $\Upsilon$-match of $F$ starting at position $i$, 
\item  $\mathcal{F}^{(2),\Upsilon}_{n,k}$ denote the set of $F \in \mathcal{F}_{n,k}$ such that 
$\mathbb{O}_n \subseteq S^{\Upsilon}(F)$, and 
\item $\Upsilon\mbox{-mch}^{(2)}(F)$ denote the number of $i$ such that $2i \in S^{\Upsilon}(F)$. 
\end{enumerate}
We say that $F \in \mathcal{F}_{n,k}$ is {\bf $\Upsilon$-alternating} if 
$S^{\Upsilon}(F) = \mathbb{O}_n$. That is, $F \in \mathcal{F}_{n,k}$ is 
$\Upsilon$-alternating if $\red{F[2i+1,2i+2]} \in \Upsilon$ for all $0\leq i\leq (n-2)/2$ and 
$\red{F[2i,2i+1]} \not \in \Upsilon$ for all $0<i\leq (n-1)/2$. We let 
$Alt^{\Upsilon}_n$ denote the number of $\Upsilon$-alternating 
elements of $\mathcal{F}_{n,k}$. 

We shall prove the following general theorems about 
$\Upsilon$-alternating fillings. 

\begin{theorem}\label{thm:EAx} Let $\Upsilon$ be a subset of $\mathcal{F}_{2,k}$. Then 
\begin{equation*}
1+\sum_{n \geq 1} \frac{t^{2n}}{(2kn)!}\sum_{F \in \mathcal{F}^{(2)}_{2n,k}}
x^{\Upsilon\mbox{-mch}^{(2)}(F)} = \
\frac{1}{1- \sum_{n \geq 1} \frac{(x-1)^{n-1}t^{2n}}{(2kn)!}full^{\Upsilon}_{2n}}.
\end{equation*}
\end{theorem}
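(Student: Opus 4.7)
\medskip

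\noindent\textbf{Proof plan for Theorem \ref{thm:EAx}.}

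The plan is to prove this by a simple inclusion/exclusion expansion that reduces the counting of $\Upsilon$-alternating fillings with prescribed even matches to a block decomposition of the columns of $F$. Concretely, for $F \in \mathcal{F}^{(2)}_{2n,k}$ let $E(F) = \{i \in [n-1] : 2i \in S^{\Upsilon}(F)\}$, so that $\Upsilon\mbox{-mch}^{(2)}(F) = |E(F)|$. Using $x^{|E(F)|} = \sum_{S \subseteq E(F)} (x-1)^{|S|}$ and switching the order of summation,
\[
\sum_{F \in \mathcal{F}^{(2)}_{2n,k}} x^{\Upsilon\mbox{-mch}^{(2)}(F)} \;=\; \sum_{S \subseteq [n-1]} (x-1)^{|S|}\, N(S),
\]
where $N(S)$ counts those $F \in \mathcal{F}_{2n,k}$ having a $\Upsilon$-match at every position in the set $T(S) := \mathbb{O}_{2n} \cup \{2i : i \in S\}$.

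Next I would read off the block structure forced by $T(S)$. Since every odd position is in $T(S)$, the column pair $(2i-1, 2i)$ is always ``linked'' by a required $\Upsilon$-match; the pair $(2i, 2i+1)$ is linked iff $i \in S$. Thus the columns of $F$ split into maximal runs of linked consecutive columns whose sizes are $2b_1, 2b_2, \ldots, 2b_m$, where $(b_1,\ldots,b_m)$ is the composition of $n$ whose ``cut points'' are exactly the elements of $[n-1] \setminus S$; in particular $|S| = n - m$. Within each such block of $2b_j$ columns we must place a filling with $\Upsilon$-matches at every position, and independence across blocks gives
\[
N(S) \;=\; \binom{2kn}{2kb_1,\, 2kb_2,\, \ldots,\, 2kb_m}\prod_{j=1}^{m} full^{\Upsilon}_{2b_j}.
\]

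To finish, I would divide by $(2kn)!$, distribute the multinomial, and use $(x-1)^{n-m} = \prod_{j=1}^{m}(x-1)^{b_j - 1}$ to rewrite the sum as a product of per-block contributions. Setting
\[
Y(x,t) \;=\; \sum_{b \geq 1} (x-1)^{b-1}\, \frac{t^{2b}}{(2kb)!}\, full^{\Upsilon}_{2b},
\]
the reorganization gives
\[
\sum_{n \geq 1} \frac{t^{2n}}{(2kn)!} \sum_{F \in \mathcal{F}^{(2)}_{2n,k}} x^{\Upsilon\mbox{-mch}^{(2)}(F)} \;=\; \sum_{m \geq 1} Y(x,t)^m \;=\; \frac{Y(x,t)}{1 - Y(x,t)},
\]
and adding $1$ to both sides yields $1/(1-Y(x,t))$, which is precisely the right-hand side of \tref{EAx}.

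The only genuine step that needs care is the block decomposition: I must verify that a $\Upsilon$-match at position $i$ couples exactly columns $i$ and $i+1$, so that fillings on different blocks are independent up to choosing which of the integers $1, \ldots, 2kn$ land in each block, which is what produces the multinomial coefficient. Everything else is a clean algebraic rearrangement and a geometric series, so I anticipate this to be the main (and essentially only) conceptual hurdle.
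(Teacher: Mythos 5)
Your argument is correct, and it reaches the identity by a genuinely different route than the paper. The paper's proof defines a ring homomorphism $\Gamma^{(2)}$ on the ring of symmetric functions by specifying $\Gamma^{(2)}(e_{2n})$ in terms of $full^{\Upsilon}_{2n}$, expands $h_{2n}$ over brick tabloids, interprets the resulting signed sum as a set of filled labeled brick tabloids, and cancels the unwanted terms with a weight-preserving sign-reversing involution; the generating function then falls out of applying $\Gamma^{(2)}$ to $H(t)=1/E(-t)$. Your proof replaces all of that machinery with the elementary expansion $x^{|E(F)|}=\sum_{S\subseteq E(F)}(x-1)^{|S|}$, an interchange of summation, and the observation that requiring matches at all positions in $\mathbb{O}_{2n}\cup\{2i: i\in S\}$ decomposes the columns into independent even-length blocks indexed by a composition of $n$, giving $N(S)=\binom{2kn}{2kb_1,\ldots,2kb_m}\prod_j full^{\Upsilon}_{2b_j}$; the geometric series $\sum_m Y^m = Y/(1-Y)$ then plays exactly the role that $H(t)=1/E(-t)$ plays in the paper. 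The two arguments are equivalent in content --- the brick tabloid expansion of $h_n$ into $e_\lambda$'s is precisely a sum over compositions, and the paper's involution implements the same inclusion--exclusion that your binomial identity performs algebraically --- but yours is self-contained and shorter, while the paper's fits this theorem into the uniform symmetric-function framework it reuses for Theorems \ref{thm:D} and \ref{thm:OAx}. The one point you flag as needing care is indeed the only substantive one, and it goes through: since $\Upsilon\subseteq\mathcal{F}_{2,k}$, a match at position $j$ constrains only the relative order of columns $j$ and $j+1$, so there is no coupling across block boundaries and the multinomial times the product of $full^{\Upsilon}_{2b_j}$'s is exact (note that $full^{\Upsilon}_{2b_j}$ counts reduced fillings, so the count is the same for any $2kb_j$-element subset assigned to that block). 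Two cosmetic remarks: the identity $(x-1)^{n-m}=\prod_{j=1}^m (x-1)^{b_j-1}$ you invoke is just $\sum_j(b_j-1)=n-m$, worth stating; and your citation macro resolves to a nonexistent label, so reference the theorem as Theorem \ref{thm:EAx}.
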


\begin{theorem}\label{thm:OAx} Let $\Upsilon$ be a subset of $\mathcal{F}_{2,k}$. Then 
\begin{equation*}
\sum_{n \geq 1} \frac{t^{2n-1}}{(k(2n-1))!}\sum_{F \in 
\mathcal{F}^{(2)}_{2n-1,k}}
x^{\Upsilon\mbox{-mch}^{(2)}(F)} = 
\frac{\sum_{n\geq 1} \frac{(x-1)^{n-1}t^{2n-1}}{(k(2n-1))!} full^{\Upsilon}_{2n-1}}{1- \sum_{n \geq 1} 
\frac{(x-1)^{n-1}t^{2n}}{(2kn)!}
full^{\Upsilon}_{2n}}.
\end{equation*}
\end{theorem}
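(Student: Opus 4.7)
\textbf{Proof proposal for Theorem~\ref{thm:OAx}.} The plan is to parallel the inclusion--exclusion and block-decomposition strategy used for Theorem~\ref{thm:EAx}, with the extra wrinkle that odd-length fillings end in a terminal block of odd column-size. First, expanding $x = 1 + (x-1)$ inside $x^{\Upsilon\mbox{-mch}^{(2)}(F)}$ gives
\[
\sum_{F \in \mathcal{F}^{(2)}_{2n-1,k}} x^{\Upsilon\mbox{-mch}^{(2)}(F)}
= \sum_{E \subseteq \mathbb{E}_{2n-2}} (x-1)^{|E|}\, g_n(E),
\]
where $g_n(E)$ is the number of $F \in \mathcal{F}_{2n-1,k}$ with $M_E := (\mathbb{O}_{2n-1}\cap [2n-2]) \cup E \subseteq S^{\Upsilon}(F)$; that is, fillings forced to have $\Upsilon$-matches at every position in $M_E$, with no constraint elsewhere.

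Next, I would decompose $M_E$ into its maximal runs of consecutive integers. A run of $b$ consecutive match positions corresponds to a block of $B = b+1$ consecutive columns of $F$ whose reduction lies in $Full^{\Upsilon}_{B}$. Since every odd position in $[2n-2]$ already belongs to $M_E$, the match position separating two adjacent blocks must be even, which forces each block to start at an odd column. A short parity count then shows that the first $r-1$ blocks have even column-sizes $B_1, \ldots, B_{r-1} \geq 2$ and the terminal block has odd column-size $B_r \geq 1$, with $\sum_i B_i = 2n-1$. Counting the even match positions contributed by each block (namely $B_i/2 - 1$ for the even-sized blocks and $(B_r-1)/2$ for the terminal odd-sized block) yields the clean identity $|E| = n - r$.

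With the block structure fixed, the contribution to $g_n(E)$ is the multinomial coefficient $\binom{(2n-1)k}{B_1 k,\, B_2 k,\, \ldots,\, B_r k} \prod_{i=1}^{r} full^{\Upsilon}_{B_i}$, since one independently chooses which entries occupy each block and then fills each block as an element of $Full^{\Upsilon}_{B_i}$. Dividing by $((2n-1)k)!$ factorizes across blocks, and summing over $n \geq 1$, the number of blocks $r \geq 1$, and the admissible block-size sequences, the left-hand side of Theorem~\ref{thm:OAx} rearranges to
\[
\sum_{r \geq 1} U(x,t)^{r-1}\, V(x,t) = \frac{V(x,t)}{1 - U(x,t)},
\]
where
\[
U(x,t) = \sum_{m \geq 1} \frac{(x-1)^{m-1}\, t^{2m}}{(2mk)!}\, full^{\Upsilon}_{2m}, \qquad
V(x,t) = \sum_{m \geq 1} \frac{(x-1)^{m-1}\, t^{2m-1}}{((2m-1)k)!}\, full^{\Upsilon}_{2m-1},
\]
which is exactly the right-hand side of the theorem.

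The only step requiring genuine care is the parity analysis ruling out odd-sized interior blocks and pinning the terminal block to odd column-size; once this is in hand, the factorization $V/(1-U)$ mirrors the $1/(1-U)$ denominator of Theorem~\ref{thm:EAx}, whose shape arises from the same even-block enumeration, and everything else is formal bookkeeping.
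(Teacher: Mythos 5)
Your proposal is correct, and it reaches the identity by a genuinely different route from the paper. The paper proves Theorem~\ref{thm:OAx} with the symmetric-function homomorphism method: it applies the map $\Gamma^{(2)}$ to the generating-function identity for the weighted power sums $p_{n,\nu}$, choosing $\nu(2n)$ so that the final brick contributes $full^{\Upsilon}_{2n-1}$ rather than $full^{\Upsilon}_{2n}$ (the last column of the final brick is left empty), expands via weighted brick tabloids, and then kills the unwanted terms with a weight-preserving, sign-reversing involution on filled labeled brick tabloids. You instead expand $x=1+(x-1)$ directly, pass to marked subsets $E$ of even match positions, and decompose the forced set $M_E$ into maximal runs; your parity analysis (interior blocks of even column-size starting at odd columns, terminal block of odd size, $|E|=n-r$) and the multinomial factorization of $g_n(E)$ across blocks are exactly right, as is the final geometric-series summation $V/(1-U)$, with the $B_r=1$ case absorbed by the convention $full^{\Upsilon}_1=1$. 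In substance the two arguments encode the same combinatorics --- your compositions into even blocks with an odd terminal block are precisely the paper's brick tabloids of type $2\mu$ with an empty last cell, and the binomial expansion plays the role of the involution --- but your version is more elementary and self-contained, needing no symmetric functions, while the paper's version buys uniformity with Theorems~\ref{thm:D} and~\ref{thm:EAx} and reuses the $p_{n,\nu}$ machinery that handles the odd-length terminal block automatically.
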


Theorems \ref{thm:EAx} and \ref{thm:OAx} are also 
proved by applying a
ring homomorphism  defined on $\Lambda$ to 
simple symmetric function identities. 
Putting $x=0$ in Theorems \ref{thm:EAx} and \ref{thm:OAx}, we obtain 
the following generating functions for $\Upsilon$-alternating 
elements of $\mathcal{F}_{n,k}$ which can be viewed as generalizations 
of (\ref{alte}) and (\ref{alto}).

\begin{corollary}

\begin{equation}
1+\sum_{n \geq 1} \frac{Alt^{\Upsilon}_{2n}t^{2n}}{(2kn)!} =
 \frac{1}{1+ \sum_{n \geq 1} \frac{(-1)^{n}t^{2n}}{(2kn)!}full^{\Upsilon}_{2n}} \ \mbox{and}
\end{equation}

\begin{equation}
\sum_{n \geq 1} \frac{Alt^{\Upsilon}_{2n-1}t^{2n-1}}{(k(2n-1))!} =
\frac{\sum_{n\geq 1} \frac{(-1)^{n-1}t^{2n-1}}{(k(2n-1))!} full^{\Upsilon}_{2n-1}}{1+ \sum_{n \geq 1} 
\frac{(-1)^{n}t^{2n}}{(2kn)!}full^{\Upsilon}_{2n}}.
\end{equation}

 \end{corollary}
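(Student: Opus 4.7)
The plan is to obtain both identities by the direct specialization $x = 0$ in Theorems \ref{thm:EAx} and \ref{thm:OAx}, since these already carry the bulk of the enumerative content. There is essentially no new combinatorial work: the only thing to verify is that the specialization on each side matches the alternating-filling count and the claimed rational expression, respectively.

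First I would analyze the left-hand sides. Recall that $F \in \mathcal{F}_{n,k}$ is $\Upsilon$-alternating exactly when $S^{\Upsilon}(F) = \mathbb{O}_n$, which says two things: every odd position carries an $\Upsilon$-match (so $F \in \mathcal{F}^{(2),\Upsilon}_{n,k}$), and no even position carries an $\Upsilon$-match (so $\Upsilon\mbox{-mch}^{(2)}(F) = 0$). Thus, in both theorems, setting $x = 0$ makes $x^{\Upsilon\mbox{-mch}^{(2)}(F)}$ collapse to the indicator of the event $\Upsilon\mbox{-mch}^{(2)}(F) = 0$, and the outer sum over $F \in \mathcal{F}^{(2),\Upsilon}_{n,k}$ then counts precisely the $\Upsilon$-alternating fillings. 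This turns the left sides into
\[
1 + \sum_{n \geq 1} \frac{Alt^{\Upsilon}_{2n}\, t^{2n}}{(2kn)!} \quad \text{and} \quad \sum_{n \geq 1} \frac{Alt^{\Upsilon}_{2n-1}\, t^{2n-1}}{(k(2n-1))!},
\]
respectively.

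Next I would specialize the right-hand sides. Substituting $x = 0$ into $(x-1)^{n-1}$ gives $(-1)^{n-1}$, so in both theorems the denominator becomes
\[
1 - \sum_{n \geq 1} \frac{(-1)^{n-1}t^{2n}}{(2kn)!}full^{\Upsilon}_{2n} = 1 + \sum_{n \geq 1} \frac{(-1)^{n}t^{2n}}{(2kn)!}full^{\Upsilon}_{2n},
\]
and, in Theorem \ref{thm:OAx}, the numerator similarly becomes $\sum_{n \geq 1} \frac{(-1)^{n-1}t^{2n-1}}{(k(2n-1))!}full^{\Upsilon}_{2n-1}$. Matching these with the reformulated left-hand sides yields the two displayed equations of the corollary.

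The only potential pitfall is the sign bookkeeping around $(-1)^{n-1}$ versus $(-1)^n$ when moving the sum across the subtraction; doing this carefully for both the numerator and denominator of the odd-case identity is the one step where I would slow down. Otherwise, no further argument is needed beyond invoking Theorems \ref{thm:EAx} and \ref{thm:OAx}.
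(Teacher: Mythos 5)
Your proposal is correct and is exactly the paper's argument: the corollary is obtained by setting $x=0$ in Theorems \ref{thm:EAx} and \ref{thm:OAx}, with the same observations that $x^{\Upsilon\mbox{-mch}^{(2)}(F)}$ at $x=0$ picks out the fillings with no even-position matches (hence the $\Upsilon$-alternating ones) and that $(x-1)^{n-1}$ specializes to $(-1)^{n-1}$. The sign bookkeeping you flag works out as you describe.
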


The outline of this paper is the following. In Section 2, 
we shall review the necessary background on symmetric functions 
needed to prove Theorems \ref{thm:D}, \ref{thm:EAx}, and \ref{thm:OAx}. 
Then in section 3, we shall give the proofs of Theorems \ref{thm:D}, \ref{thm:EAx}, and \ref{thm:OAx}. In section 4, we shall give the proof of 
Theorem \ref{thm:N}. 
In Section 5, we shall find formulas for 
$full_n^\Upsilon$ where $\Upsilon$ are subsets 
of the standard tableaux of shape $2^2$ and 
we shall completely classify all standard tableaux, $P\in St_{2^k}$, such 
that $full_n^P =1$ for all $n \geq 1$.
In addition, we shall briefly outline some methods that we have used to compute $full_n^P$ for certain 
standard tableaux $P \in St_{2^k}$ where $k \geq 3$.

\section{Symmetric Functions}
\label{section:sf}

In this section we give the necessary background on symmetric
functions needed for our proofs of Theorems \ref{thm:D}, \ref{thm:EAx}, and \ref{thm:OAx}.
We shall consider the ring of symmetric functions, $\Lambda$, over 
infinitely many variables $x_1, x_2, \ldots $.  The homogeneous 
symmetric functions, $h_n \in \Lambda$, and elementary symmetric functions, 
$e_n \in \Lambda$, are defined by the generating 
functions 
\begin{equation*}
H(t) = \sum_{n \geq 0} h_n t^n = \prod_{i=1}^\infty \frac{1}{1-x_it} 
\ \mbox{and} \ 
E(t) = \sum_{n \geq 0} e_n t^n = \prod_{i=1}^\infty (1+x_it).
\end{equation*} 
The $n$-th power symmetric function, $p_n \in \Lambda$, is defined as  
$$p_n = \sum_{i=1}^\infty x_i^n.$$ 

Let $\la = (\la_1,\dots,\la_\ell)$ be an integer partition; that is,
$\la$ is a finite sequence of weakly increasing non-negative
integers.  Let $\ell(\la)$ denote the number of nonzero integers in
$\la$. If the sum of these integers is $n$, we say that $\la$ is a
partition of $n$ and write $\la \vdash n$.  For any partition $\la =
(\la_1,\dots,\la_\ell)$, define $h_\la = h_{\la_1} \cdots
h_{\la_\ell}$, $e_\la = e_{\la_1} \cdots
e_{\la_\ell}$, and $p_\la = p_{\la_1} \cdots
p_{\la_\ell}$. The well-known fundamental theorem of symmetric
functions, see \cite{Mac}, says that 
$\{e_\la : \la \vdash n\}$ is a
basis for $\Lambda_n$, the space of symmetric functions 
which are homogeneous of degree $n$. Equivalently, the fundamental 
theorem of symmetric functions states that $\{e_0,e_1, \ldots \}$ 
is an algebraically independent set of generators for 
the ring $\Lambda$. It follows that one can completely specify a  
ring homomorphism $\Gamma: \Lambda \rightarrow R$ from 
$\Lambda$ into a ring $R$ by giving the values of $\Gamma(e_n)$ for 
$n \geq 0$.   

Next we give combinatorial interpretations 
to the expansion of $h_\mu$ in terms of the elementary symmetric 
functions.  Given partitions $\lambda =(\la_1, \ldots, \la_\ell)\vdash n$ and $\mu\vdash n$,
a $\lambda$-brick tabloid of shape $\mu$ is a filling 
of the Ferrers diagram of shape $\mu$ with bricks of size $\lambda_1, \ldots, \lambda_\ell$ such that 
each brick lies in single row and no two bricks overlap. For 
example, Figure  \ref{figure:labrick} shows all the 
$\la$-brick tabloids of shape $\mu$ where $\la = (1,1,2,2)$ and $\mu = (4,2)$.

\fig{labrick}{The four $(1,1,2,2)$-brick tabloids of shape $(4,2)$.}

Let $\mathcal{B}_{\la,\mu}$ denote the set of all $\la$-brick tabloids of shape $\mu$ and let
$B_{\la,\mu} =|\mathcal{B}_{\la,\mu}|$.  E\u{g}ecio\u{g}lu and Remmel proved in
\cite{ER} that
\begin{equation}
\label{htoe}
h_\mu = \sum_{\la \vdash n} (-1)^{n - \ell(\la)} B_{\la,\mu} e_\la.
\end{equation}

If $T$ is a brick tabloid of shape $(n)$ such that the 
lengths of the bricks, reading from left to right, are 
$b_1, \ldots, b_\ell$, then we shall write 
$T=(b_1, \ldots, b_\ell)$. For example, 
the brick tabloid $T =(2,3,1,4,2)$ is pictured in 
Figure \ref{figure:btab}. 

\fig{btab}{The brick tabloid $T =(2,3,1,4,2)$.}

Next we define a 
 symmetric function $p_{n,\nu}$ whose relationship with $e_\lambda$,
which is very similar to the relationship between $h_n$ and
$e_\lambda$, was first introduced in \cite{LR} and
\cite{MenRem2}. Let $\nu$ be a function which maps the set of
non-negative integers into a field $F$. Recursively define
$p_{n,\nu} \in \La_n$ by setting $p_{0,\nu} = 1$ and letting
\begin{equation*}
p_{n,\nu}
= (-1)^{n-1} \nu(n) e_n + \sum_{k=1}^{n-1}(-1)^{k-1}e_k p_{n-k,\nu}
\end{equation*}
for all $n \geq 1$.  By multiplying series, this means that
\begin{equation*}
\left(\sum_{n \geq 0}(-1)^n e_n t^n \right) \left(\sum_{n \geq 1}
p_{n,\nu} t^n \right)
= \sum_{n \geq 1} \left ( \sum_{k=0}^{n-1} p_{n-k,\nu} (-1)^k e_k
\right ) t^n
= \sum_{n \geq 1} (-1)^{n-1} \nu(n) e_n t^n,
\end{equation*}
where the last equality follows from the definition of $p_{n,\nu}$.
Therefore,
\begin{equation}\label{pvu1}
\sum_{n \geq 1} p_{n,\nu}t^n
= \frac{\sum_{n \geq 1} (-1)^{n-1} \nu(n) e_n t^n}
{\sum_{n \geq 0} (-1)^n e_n t^n}
\end{equation}
or, equivalently,
\begin{equation}
\label{pnu2}
1+ \sum_{n \geq 1} p_{n,\nu}t^n
= \frac{1+ \sum_{n \geq 1} (-1)^{n}(e_n - \nu(n) e_n) t^n}
{\sum_{n \geq 0} (-1)^n e_n t^n}.
\end{equation}
If $\nu(n) = 1$ for all $n \geq 1$, then \eqref{pnu2} becomes
\begin{equation*}
1 + \sum_{n \geq 1} p_{n,1} t^n
= \frac{1}{\sum_{n \geq 0} (-1)^n e_n t^n}
= 1 + \sum_{n \geq 1} h_n t^n
\end{equation*}
which implies $p_{n,1} = h_n$.  Other special cases for $\nu$ give
well-known generating functions. For example, if $\nu(n) = n$ for $n
\geq 1$, then $p_{n,\nu}$ is the power symmetric function $p_n$.    
By taking $\nu(n) = (-1)^k \chi(n \geq k+1)$ for some $k
\geq 1$, $p_{n,(-1)^k\chi(n \geq k+1)}$ is the Schur function
corresponding to the partition $(1^k,n)$. Here for any statement 
$A$, we let $\chi(A) =1$ if $A$ is true and $\chi(A) =0$ if $A$ is 
false.

This definition of $p_{n,\nu}$ is desirable because of its expansion in terms
of elementary symmetric functions.  The coefficient of $e_\la$ in $p_{n,\nu}$
has a nice combinatorial interpretation similar to that of the homogeneous
symmetric functions.  Suppose $T$ is a brick tabloid of shape $(n)$ and type
$\la$ and that the final brick in $T$ has length $\ell$.  Define the weight
of a brick tabloid $w_{\nu}(T)$ to be $\nu(\ell)$ and let
\begin{equation*}
w_{\nu}(B_{\la,(n)}) = \sum_{\substack{\text{$T$ is a brick tabloid} \\
\text{of shape $(n)$ and type $\la$}}} w_{\nu}(T).
\end{equation*}
When $\nu(n) = 1$ for $n \geq 1$, $B_{\la,(n)}$ and $w_{\nu}(B_{\la,(n)})$ are
the same.  Then Mendes and Remmel \cite{MenRem2} proved that 
\begin{equation*}
p_{n,\nu} = \sum_{\la \vdash n}(-1)^{n-\ell(\la)} w_{\nu}(B_{\lambda,(n)}) e_\la.
\end{equation*}

\section{The proofs of Theorems \ref{thm:D}, \ref{thm:EAx}, and \ref{thm:OAx}}

In this section, we shall give the proofs of  Theorems \ref{thm:D}, \ref{thm:EAx}, and \ref{thm:OAx}. 
We start with the proof of Theorem \ref{thm:D}.

\ \\
{\bf Proof of Theorem \ref{thm:D}.}

\ \\
Our goal is to prove that for all $\Upsilon \subseteq \mathcal{F}_{2,k}$, 
\begin{equation}\label{eq:Fris}
D^{\Upsilon}(x,t) = 1+ \sum_{n \geq 1} \frac{t^n}{(kn)!} \sum_{F \in \mathcal{F}_{n,k}} 
x^{\Umch{F}} = \frac{1-x}{1-x + 
\sum_{n \geq 1} \frac{((x-1)t)^n}{(kn)!} full^{\Upsilon}_{n}}.
\end{equation}

Define a ring homomorphism
$\Gamma:\Lambda \rightarrow \mathbb{Q}[x]$, where $\mathbb{Q}[x]$ is the
polynomial ring over the rationals, 
by setting $\Gamma(e_0) =1$ 
and
\begin{equation}
\Gamma(e_{n}) = \frac{(-1)^{n-1}}{(kn)!} full^{\Upsilon}_{n} (x-1)^{n-1}
\end{equation}
for $n \geq 1$. Then we claim that
\begin{equation}\label{ris1}
(kn)!\Gamma(h_n) = \sum_{F \in \mathcal{F}_{n,k}} 
x^{\Umch{F}}
\end{equation}
for all $n \geq 1$.
That is,
\begin{eqnarray}\label{ris2}
(kn)!\Gamma(h_{n}) &=& 
(kn)!\sum_{\mu\vdash n} (-1)^{n-\ell(\mu)}B_{\mu, (n)}\Gamma(e_{\mu}) \nonumber \\
&=& (kn)! \sum_{\mu \vdash n} (-1)^{n-\ell(\mu)} \sum_{(b_1,
\ldots, b_{\ell(\mu)}) \in \mathcal{B}_{\mu,(n)}}
\prod_{j=1}^{\ell(\mu)} \frac{(-1)^{b_j-1}}{(kb_j)!} 
full^{\Upsilon}_{b_j} (x-1)^{b_j-1}  \nonumber \\
&=& \ \sum_{\mu \vdash n} \sum_{(b_1, \ldots, b_{\ell(\mu)}) \in \mathcal{B}_{\mu,(n)}} 
\binom{kn}{kb_1,\ldots,kb_{\ell(\mu)}} \prod_{j=1}^{\ell(\mu)}  full^{\Upsilon}_{b_j}(x-1)^{b_j-1}.
\end{eqnarray}

Next we want to give a combinatorial interpretation to
(\ref{ris2}).  First we interpret  
$\binom{kn}{kb_1,\ldots,kb_{\ell(\mu)}}$ as the number of 
ways to pick a set partition of $\{1,\ldots, kn \}$ into 
sets $S_1, \ldots, S_{\ell(\mu)}$ where $|S_j|=kb_j$ for 
$j=1, \ldots, \ell(\mu)$. 
For each set $S_j$, we interpret $full^{\Upsilon}_{b_j}$ as 
the number of ways to arrange the numbers in $S_j$ into a 
$k \times b_j$ array such that there is an $\Upsilon$-match 
starting at positions $1, \ldots, b_j-1$. 
Finally, we interpret $\prod_{j=1}^{\ell(\mu)}
(x-1)^{b_j-1}$ as all ways of picking labels for all the columns of each
brick, except the final column, with either an $x$ or a $-1$. For
completeness, we label the final column of each brick with $1$. We
shall call all such objects created in this way \emph{filled labeled
brick tabloids} and let $\mathcal{H}_{n,k}^{\Upsilon}$ denote the set of all
filled labeled brick tabloids that arise in this way.  Thus a $C
\in \mathcal{H}_{n,k}^{\Upsilon}$ consists of a brick tabloid, $T$, 
a filling, $F \in \mathcal{F}_{n,k}$, and a labeling, $L$, of the columns of $T$
with elements from $\{x,1,-1\}$ such that
\begin{enumerate}
\item $F$ has an $\Upsilon$-match starting at each column that
is not a final column a brick,    
\item the final column of each
brick is labeled with $1$, and 
\item each column that is not a final
column of a brick is labeled with $x$ or $-1$.
\end{enumerate}
We then define the weight of $C$, $w(C)$, to be the product of all
the $x$ labels in $L$ and the sign of $C$, $sgn(C)$, to be
the product of all the $-1$ labels in $L$. For example,
if $n =12$, $k=2$, $\Upsilon =St_{2^2}$, 
and $T =(4,3,3,2)$, then Figure \ref{figure:fris1}
demonstrates such a composite object $C \in \mathcal{H}_{12,2}$ where,
$w(C) = x^5$ and $sgn(C) =-1$.

Thus
\begin{equation}\label{ris4}
(kn)!\Gamma(h_{n}) = \sum_{C \in \mathcal{H}_{n,k}^{\Upsilon}}
sgn(C) w(C).
\end{equation}

\fig{fris1}{A composite object $C \in \mathcal{H}_{12,2}^{St_{2^2}}$.}

Next we define a weight-preserving sign-reversing involution
$I:\mathcal{H}_{n,k}^{\Upsilon} \rightarrow \mathcal{H}_{n,k}^{\Upsilon}$.  
To define
$I(C)$, we scan the columns of $C =(T,F,L)$ from left  to right
looking for the leftmost column, $t$, such that either (i) $t$ is
labeled with $-1$ or (ii) $t$ is at the end of a brick, $b_j$, and
the brick immediately following $b_j$, namely $b_{j+1}$, has the property
that $F$ has an $\Upsilon$-match starting in each column of 
$b_j$ and $b_{j+1}$ except, possibly, the last cell of 
$b_{j+1}$.   In case (i), $I(C)
=(T',F',L')$ where $T'$ is the result of  replacing the brick
$b$ in $T$ containing $t$ by two bricks, $b^*$ and $b^{**}$, where
$b^*$ contains the $t$-th column plus all the columns in $b$ to the left
of $t$ and $b^{**}$ contains all the columns of $b$ to the right of
$t$, $F' =F$, and $L'$ is the labeling that results
from $L$ by changing the label of column $t$ from $-1$ to $1$. In
case (ii), $I(C) =(T',F',L')$ where $T'$ is the result of
replacing the bricks $b_j$ and $b_{j+1}$ in $T$ by a single brick
$b,$ $F' = F$, and $L'$ is the labeling that results
from $L$ by changing the label of column $t$ from $1$ to $-1$. If
neither case (i) or case (ii) applies, then we let $I(C) =C$. For
example, if $C$ is the element of $\mathcal{H}_{12,2}^{St_{2^2}}$ pictured in
Figure \ref{figure:fris1}, then $I(C)$ is pictured in Figure
\ref{figure:fris2}.

\fig{fris2}{$I(C)$ for $C$ in Figure \ref{figure:fris1}.}

It is easy to see that $I^2(C) =C$ for all 
$C \in \mathcal{H}_{n,k}^{\Upsilon}$ and that if 
$I(C) \neq C$, then $sgn(C)w(C) = -sgn(I(C))w(I(C))$. Hence  
$I$ is a weight-preserving and sign-reversing involution that shows 
\begin{equation}\label{ris5}
(kn)!\Gamma(h_n) = \sum_{C \in \mathcal{H}_{n,k}^{\Upsilon},I(C) = C}
sgn(C) w(C).
\end{equation}

Thus, we must examine the fixed points, $C = (T,F,L)$, of $I$.
First, there can be no $-1$ labels in $L$ so that $sgn(C) =1$.
Moreover,  if $b_j$ and $b_{j+1}$ are two consecutive bricks in $T$
and $t$ is the last column of $b_j$, then it cannot be the case that
there is an $\Upsilon$-match starting at position $t$ in $F$ 
since otherwise we
could combine $b_j$ and $b_{j+1}$. It follows that $sgn(C)w(C) = x^{\Umch{F}}$.
For example, Figure \ref{figure:fris3} shows a 
fixed point of $I$ in the case $n=12$, $k=2$, and 
$\Upsilon = St_{2^2}$. 

Vice versa, if
$F \in \mathcal{F}_{n,k}$, then we can create a fixed point, $C
=(T,F,L)$, by forming bricks in $T$ that end at columns that are not the 
start of an $\Upsilon$-match in $F$, labeling each column
that is the start of an $\Upsilon$-match in $F$ with $x$, and labeling the remaining columns 
with $1$. Thus we have shown that
\begin{equation*}
(kn)!\Gamma(h_n) = \sum_{F \in \mathcal{F}_{n,k}}x^{\Umch{F}}
\end{equation*}
as desired.

\fig{fris3}{A fixed point of $I$.}

Applying $\Gamma$ to the identity $H(t) = \frac{1}{E(-t)}$, we get
\begin{eqnarray*}
\sum_{n \geq 0} \Gamma(h_n) t^n &=& \sum_{n\geq 0} \frac{t^{n}}{(kn)!} \sum_{F \in \mathcal{F}_{n,k}}x^{\Umch{F}} \\
&=& \frac{1}{1+\sum_{n\geq 1} (-t)^n\Gamma(e_n)} \\
&=& \frac{1}{1+\sum_{n\geq 1}(-1)^{n} t^{n}
\frac{(-1)^{n-1}(x-1)^{n-1}}{(kn)!} full^{\Upsilon}_{n}}\\
&=& \frac{1-x}{1-x + \sum_{n\geq 1}
\frac{((x-1)t)^n}{(kn)!} full^{\Upsilon}_{n}}
\end{eqnarray*}
which proves (\ref{eq:Fris}).

Putting $x=0$ in (\ref{eq:Fris}) immediately yields the following 
corollary. 

\begin{corollary}  Let $\Upsilon \subseteq \mathcal{F}_{2,k}$ and 
$A^{\Upsilon}_{n,k}$ be the number of 
$F \in \mathcal{F}_{n,k}$ that have no $\Upsilon$-matches. Then 
\begin{equation} \label{eq:AUt}
A^{\Upsilon}(t) =  1+ \sum_{n \geq 1} 
\frac{A^{\Upsilon}_{n,k}t^n}{(kn)!}  = \frac{1}{1+\sum_{n\geq
1}\frac{(-t)^n}{(kn)!}full^{\Upsilon}_{n}}.
\end{equation}
\end{corollary}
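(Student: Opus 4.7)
The plan is to obtain this corollary as an immediate specialization of Theorem \ref{thm:D} by substituting $x=0$ into equation (\ref{eq:Fris}). The key observation is that when $x = 0$, the weight $x^{\Umch{F}}$ vanishes for every filling except those with $\Umch{F} = 0$, on which it equals $1$ (under the standard convention $0^0 = 1$). Therefore the inner sum on the left-hand side reduces to a count of $\Upsilon$-avoiding fillings.

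More concretely, I would first argue that
$$\sum_{F \in \mathcal{F}_{n,k}} 0^{\Umch{F}} = \#\{F \in \mathcal{F}_{n,k} : \Umch{F} = 0\} = A^{\Upsilon}_{n,k},$$
so that $D^{\Upsilon}(0,t) = A^{\Upsilon}(t)$ by comparing the two definitions term by term. Then I would substitute $x = 0$ into the closed form on the right-hand side of (\ref{eq:Fris}): the prefactor $1-x$ becomes $1$, and each factor $(x-1)^n$ becomes $(-1)^n$, so the denominator becomes
$$1 + \sum_{n \geq 1} \frac{(-t)^n}{(kn)!}\, full^{\Upsilon}_{n}.$$
Equating the two evaluations yields the claimed formula.

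There is no real obstacle here, since Theorem \ref{thm:D} has already done the substantive work via the symmetric function homomorphism and the sign-reversing involution on filled labeled brick tabloids. The only mild bookkeeping step is to confirm that the $x = 0$ specialization of the formal power series identity in $\mathbb{Q}[x][[t]]$ yields a valid identity in $\mathbb{Q}[[t]]$; this is immediate because the denominator on the right-hand side evaluates to a series with constant term $1$, so it is invertible in $\mathbb{Q}[[t]]$ and the specialization commutes with the rational operation. With this observation, the corollary follows in one line.
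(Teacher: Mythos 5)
Your proposal is correct and is exactly the paper's argument: the corollary is obtained by setting $x=0$ in equation (\ref{eq:Fris}) of Theorem \ref{thm:D}, with $0^{\Umch{F}}$ picking out the $\Upsilon$-avoiding fillings and the right-hand side simplifying as you describe. No gaps.
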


Recall that if 
$\Upsilon \subseteq \mathcal{F}_{k,2}$, then 
$\mathcal{F}^{(2),\Upsilon}_{n,k}$ denotes the set of $F \in \mathcal{F}_{n,k}$ that have $\Upsilon$-matches starting  
at all the odd-numbered columns and that for 
$F \in \mathcal{F}^{(2),\Upsilon}_{n,k}$, 
$\Upsilon\mbox{-mch}^{(2)}(F)$ denotes the number of $i$ such that $F$ has 
an $\Upsilon$-match starting at position $2i$. 

Next we give the proof of Theorem \ref{thm:EAx}.

\ \\
{\bf Proof of Theorem \ref{thm:EAx}.}

\ \\
Our goal is to prove that if $\Upsilon$ is a subset of $\mathcal{F}_{2,k}$,
then 
\begin{eqnarray}\label{eq:2Fris}
D^{(2)}(x,t) &=& 1 +\sum_{n \geq 1} \frac{t^{2n}}{(2kn)!}\sum_{F \in \mathcal{F}^{(2)}_{2n,k}}
x^{\Upsilon\mbox{-mch}^{(2)}(F)} \nonumber \\
&=& \frac{1}{1- \sum_{n \geq 1} \frac{(x-1)^{n-1}t^{2n}}{(2kn)!}full^{\Upsilon}_{2n}}.
\end{eqnarray}

The proof of Theorem \ref{thm:EAx} is very similar to the proof of 
Theorem \ref{thm:D}. We only have to modify our 
ring homomorphism sightly. That is, 
define a ring homomorphism
$\Gamma^{(2)}:\Lambda \rightarrow \mathbb{Q}[x]$ by setting 
 $\Gamma^{(2)}(e_0) =1$, $\Gamma^{(2)}(e_{2n+1}) =0$ for all 
$n \geq 0$, and 
\begin{equation}
\Gamma(e_{2n}) = \frac{(-1)^{2n-1}}{(2kn)!} full^{\Upsilon}_{2n} (x-1)^{n-1}
\end{equation}
for $n \geq 1$.

Then we claim that for $n \geq 1$, 
\begin{equation}\label{2ris0}
\Gamma^{(2)}(h_{2n-1}) = 0
\end{equation}
and 
\begin{equation}\label{2ris1}
(2kn)!\Gamma^{(2)}(h_{2n}) = \sum_{F \in \mathcal{F}^{(2)}_{2n,k}} 
x^{\Upsilon\mbox{-mch}^{(2)}(F)}.
\end{equation}
Note that 
\begin{equation}\label{key}
\Gamma^{(2)}(h_{n}) =
\sum_{\mu\vdash n} (-1)^{n-\ell(\mu)}B_{\mu, (n)}\Gamma^{(2)}(e_{\mu}).
\end{equation}
Now if $n$ is odd, then every $\mu \vdash n$ must have an odd part 
and hence our definitions  
force $\Gamma^{(2)}(e_{\mu}) =0$. Thus 
$\Gamma^{(2)}(h_{2n-1}) = 0$ for all $n \geq 1$. On the other 
hand, if $n$ is even, then we can restrict ourselves 
to those partitions in (\ref{key}) that contain only even parts.
For any partition $\lambda =(\la_1, \ldots, \la_\ell)$ of $n$, we 
let $2\la$ denote the partition $(2\la_1, \ldots, 2\la_\ell)$. 
Then for $n \geq 1$ we have 

\begin{eqnarray}\label{2ris2}
(2kn)!\Gamma^{(2)}(h_{2n}) &=& 
(2kn)!\sum_{\mu\vdash n} (-1)^{2n-\ell(\mu)}B_{2\mu, (2n)}\Gamma^{(2)}(e_{2\mu}) \nonumber \\
&=& (2kn)! \sum_{\mu \vdash n} (-1)^{2n-\ell(\mu)} \sum_{(2b_1,
\ldots, 2b_{\ell(\mu)}) \in \mathcal{B}_{2\mu,(2n)}}
\prod_{j=1}^{\ell(\mu)} \frac{(-1)^{2b_j-1}}{(2kb_j)!} 
full^{\Upsilon}_{2b_j} (x-1)^{b_j-1}  \nonumber \\
&=& \ \sum_{\mu \vdash n} \sum_{(2b_1, \ldots, 2b_{\ell(\mu)}) \in \mathcal{B}_{2\mu,(2n)}} 
\binom{2kn}{2kb_1,\ldots,2kb_{\ell(\mu)}} \prod_{j=1}^{\ell(\mu)}  full^{\Upsilon}_{2b_j}(x-1)^{b_j-1}.
\end{eqnarray}

Again we can give a combinatorial interpretation to
(\ref{2ris2}).  First we interpret  
$\binom{2kn}{2kb_1,\ldots,2kb_{\ell(\mu)}}$ as the number of 
ways to pick a set partition of $\{1,\ldots, 2kn \}$ into 
sets $S_1, \ldots, S_{\ell(\mu)}$ 
where $|S_j|=2kb_j$ for $j=1, \ldots, \ell(\mu)$. 
For each set $S_j$, we interpret $full^{\Upsilon}_{2b_j}$ as 
the number of ways to arrange the numbers in $S_j$ into a 
$k \times 2b_j$ array such that there is an $\Upsilon$-match 
starting at positions $1, \ldots, 2b_j-1$ in the brick $2b_j$. 
Finally, we interpret $\prod_{j=1}^{\ell(\mu)}
(x-1)^{b_j-1}$ as all ways of picking labels 
for all the even numbered columns of each
brick, except the final even numbered column in the brick, with either an $x$ or a $-1$. For
completeness, we label the final even numbered 
column of each brick with $1$. We
shall call all such objects created in this way \emph{filled labeled
brick tabloids} and let $\mathcal{H}^{(2),\Upsilon}_{2n,k}$ 
denote the set of all
filled labeled brick tabloids that arise in this way.  Thus a $C
\in \mathcal{H}^{(2),\Upsilon}_{2n,k}$ consists of a brick tabloid, $T$,
a filling, $F \in \mathcal{F}_{2n,k}$, and a labeling, $L$, of the 
even numbered columns of $T$
with elements from $\{x,1,-1\}$ such that
\begin{enumerate}
\item all bricks have even size, 
\item $F$ has an $\Upsilon$-match starting at each column that 
is not a final column of its  brick,    
\item the final column of each
brick is labeled with $1$, and 
\item each even numbered column that is not a final
column of a brick is labeled with $x$ or $-1$.
\end{enumerate}
We then define the weight of $C$, $w(C)$, to be the product of all
the $x$ labels in $L$ and the sign of $C$, $sgn(C)$, to be
the product of all the $-1$ labels in $L$. For example,
if $n =12$, $k=2$, $\Upsilon =St_{2^2}$, 
and $T =(4,2,4,2)$, then Figure \ref{figure:2fris1}
pictures such a filled labeled brick tabloid 
$C \in \mathcal{H}^{(2),\Upsilon}_{12,2}$ where
$w(C) = x$ and $sgn(C) =-1$.

Thus
\begin{equation}\label{2ris4}
(2kn)!\Gamma^{(2)}(h_{2n}) = \sum_{C \in \mathcal{H}^{(2),\Upsilon}_{2n,k}}
sgn(C) w(C).
\end{equation}

\fig{2fris1}{A  $C \in \mathcal{H}^{(2),St_{2^2}}_{12,2}$.}

Next we define a weight-preserving sign-reversing involution
$I:\mathcal{H}^{(2),\Upsilon}_{2n,k} \rightarrow \mathcal{H}^{(2),\Upsilon}_{2n,k}$ essentially in the same way as in Theorem \ref{thm:D}.  
That is, to  define
$I(C)$, we scan the cells of $C =(T,F,L)$ from left  to right
looking for the leftmost column, $t$, such that either (i) $t$ is
labeled with $-1$ or (ii) $t$ is at the end of a brick, $2b_j$, and
the brick immediately following $2b_j$, namely $2b_{j+1}$, has the property
that $F$ has a $\Upsilon$-match starting in each column of
$2b_j$ and $2b_{j+1}$ except, possibly, the last cell of 
$2b_{j+1}$.   In case (i), $I(C)
=(T',F',L')$ where $T'$ is the result of  replacing the brick
$2b$ in $T$ containing $t$ by two bricks, $2b^*$ and $2b^{**}$, where
$2b^*$ contains the $t$-th column plus all the cells in $2b$ to the left
of $t$ and $2b^{**}$ contains all the columns of $2b$ to the right of
$t$, $F' =F$, and $L'$ is the labeling that results
from $L$ by changing the label of column $t$ from $-1$ to $1$. In
case (ii), $I(C) =(T',F',L')$ where $T'$ is the result of
replacing the bricks $2b_j$ and $2b_{j+1}$ in $T$ by a single brick
$2b,$ $F' = F$, and $L'$ is the labeling that results
from $L$ by changing the label of column $t$ from $1$ to $-1$. If
neither case (i) or case (ii) applies, then we let $I(C) =C$. For
example, if $C$ is the element of 
$\mathcal{H}^{(2),St_{2^2}}_{12,2}$ pictured in
Figure \ref{figure:2fris1}, then $I(C)$ is pictured in Figure
\ref{figure:2fris2}.

\fig{2fris2}{$I(C)$ for $C$ in Figure \ref{figure:2fris1}.}

It is easy to see that the involution $I$ is weight-preserving and sign-reversing
and hence shows that
\begin{equation}\label{2ris5}
(2kn)!\Gamma^{(2)}(h_{2n}) = \sum_{C \in \mathcal{H}^{(2),\Upsilon}_{n,k},I(C) = C}
sgn(C) w(C).
\end{equation}

Thus, we must examine the fixed points $C = (T,F,L)$ of $I$.
First, by construction, $F$ must have $\Upsilon$-matches starting 
at all odd positions. Hence $F \in \mathcal{F}^{(2)}_{2n,k}$. 
As before, there can be no $-1$ labels in $L$ so that  $sgn(C) =1$.
Moreover,  if $2b_j$ and $2b_{j+1}$ are two consecutive bricks in $T$
and $t$ is the last column of $2b_j$, then it cannot be the case 
that there is an $\Upsilon$-match starting at position $t$ in $F$ 
since otherwise we
could combine $2b_j$ and $2b_{j+1}$. It follows that $sgn(C)w(C) = 
x^{\Upsilon\mbox{-mch}^{(2)}(F)}$. 

Vice versa, if
$F \in \mathcal{F}^{(2)}_{2n,k}$, then we can create a fixed point, $C
=(T,F,L)$, by forming bricks in $T$ that end at even numbered columns that are not the
start of an $\Upsilon$-match in $F$, and labeling each even numbered column
that is the start of an $\Upsilon$-match in $F$ with $x$, and labeling the remaining even numbered columns
with $1$. Thus we have shown that
\begin{equation*}
(2kn)!\Gamma^{(2)}(h_{2n}) = \sum_{F \in \mathcal{F}^{(2)}_{2n,k}}x^{\Upsilon\mbox{-mch}^{(2)}(F)}
\end{equation*}
as desired.

Applying $\Gamma^{(2)}$ to the identity $H(t) = \frac{1}{E(-t)}$, we get
\begin{eqnarray*}
\sum_{n \geq 0} \Gamma(h_n) t^n &=& \sum_{n\geq 0} \frac{t^{2n}}{(2kn)!}
\sum_{F \in \mathcal{F}^{(2)}_{2n,k}}x^{\Upsilon\mbox{-mch}^{(2)}(F)} \\
&=& \frac{1}{1+\sum_{n\geq 1} (-t)^{2n}\Gamma^{(2)}(e_{2n})} \\
&=& \frac{1}{1+\sum_{n\geq 1} t^{2n}
\frac{(-1)^{2n-1}(x-1)^{n-1}}{(2kn)!} full^{\Upsilon}_{2n}}\\
&=& \frac{1}{1 - \sum_{n\geq
1}\frac{(x-1)^{n-1}t^{2n}}{(2kn)!}full^{\Upsilon}_{2n}}
\end{eqnarray*}
which proves (\ref{eq:2Fris}).

As noted in the introduction, putting $x=0$ in (\ref{eq:2Fris}) gives 
us the generating function for the number of $\Upsilon$-alternating elements 
of $\mathcal{F}_{2n,k}$. That is, if $\Upsilon$ is a subset of $\mathcal{F}_{2,k}$, then 
\begin{equation}\label{eq:EvAlt}
1+\sum_{n \geq 1} \frac{Alt^{\Upsilon}_{2n}t^{2n}}{(2kn)!} =
 \frac{1}{1+ \sum_{n \geq 1} \frac{(-1)^{n}t^{2n}}{(2kn)!}
full^{\Upsilon}_{2n}}.
\end{equation}

Next we give the proof of Theorem \ref{thm:OAx}.

\ \\
{\bf Proof of Theorem \ref{thm:OAx}.}

\ \\
Our goal is to prove that if $\Upsilon$ is  a subset of $\mathcal{F}_{2,k}$, 
then 
\begin{eqnarray}\label{eq:3Fris}
D^{(2)}(x,t) &=& \sum_{n \geq 0} \frac{t^{2n+1}}{((2n+1)k)!}\sum_{F \in \mathcal{F}^{(2)}_{2n+1,k}}
x^{\Upsilon\mbox{-mch}^{(2)}(F)} \nonumber \\
&=& \frac{\sum_{n \geq 1} \frac{(x-1)^{n-1}t^{2n-1}}{(k(2n-1))!}full^{\Upsilon}_{2n-1}}
{1- \sum_{n \geq 1} \frac{(x-1)^{n-1}t^{2n}}{(2kn)!}full^{\Upsilon}_{2n}}.
\end{eqnarray}

Let $\Gamma^{(2)}:\Lambda \rightarrow \mathbb{Q}[x]$ be the ring 
homomorphism defined in Theorem \ref{thm:EAx}. Define 
$\nu:\mathbb{P} \rightarrow\mathbb{Q}$ by setting  
$\nu(n) = 0$ if $n$ is odd and setting 
\begin{equation}\label{nudef}
\nu(2n) = \frac{full^{\Upsilon}_{2n-1}(2kn)!}{full^{\Upsilon}_{2n}(k(2n-1))!}
\end{equation}
for $n \geq 1$.
We have defined $\nu$ so that for $n \geq 1$, 
\begin{eqnarray}\label{etimesnu}
\nu(2n)\Gamma^{(2)}(e_{2n}) &=& 
\frac{full^{\Upsilon}_{2n-1}(2kn)!}{full^{\Upsilon}_{2n}(k(2n-1))!}
\frac{(-1)^{2n-1}(x-1)^{n-1}}{(2kn!)}full^{\Upsilon}_{2n} \nonumber \\
&=& \frac{(-1)^{2n-1}(x-1)^{n-1}}{(k(2n-1))!}full^{\Upsilon}_{2n-1}.
\end{eqnarray}

Then we claim that for $n \geq 0$, 
\begin{equation}\label{3ris0}
\Gamma^{(2)}(p_{2n+1,\nu}) = 0
\end{equation}
and 
\begin{equation}\label{3ris1}
(k(2n+1))!\Gamma^{(2)}(p_{2n+2,\nu}) = \sum_{F \in \mathcal{F}^{(2)}_{2n+1,k}} 
x^{\Upsilon\mbox{-mch}^{(2)}(F)}.
\end{equation}
Note that 
\begin{equation}\label{3key}
\Gamma^{(2)}(p_{n,\nu}) =
\sum_{\mu\vdash n} (-1)^{n-\ell(\mu)}w_\nu(B_{\mu, (n)})\Gamma^{(2)}(e_{\mu}).
\end{equation}
Now if $n$ is odd, then every $\mu \vdash n$ must have an odd part 
and hence our definitions  
force $\Gamma^{(2)}(e_{\mu}) =0$. Thus 
$\Gamma^{(2)}(p_{2n+1,\nu}) = 0$ for all $n \geq 0$. On the other 
hand, if $n$ is even, then we can restrict ourselves 
to those partitions which contain only even parts in (\ref{3key}).  
Thus for $n \geq 1$ we have  
\begin{eqnarray}\label{3ris2}
&&(k(2n+1))!\Gamma^{(2)}(p_{2n+2,\nu}) = \nonumber \\
&&(k(2n+1))!\sum_{\mu\vdash n+1} (-1)^{2n+2-\ell(\mu)}w_\nu(B_{2\mu, (2n+2)})
\Gamma^{(2)}(e_{2\mu}) = \nonumber \\
&& (k(2n+1))! \sum_{\mu \vdash n+1} (-1)^{2n+2-\ell(\mu)} \times \nonumber \\ 
&& \ \ \ \ \sum_{(2b_1,
\ldots, 2b_{\ell(\mu)}) \in \mathcal{B}_{2\mu,(2(n+1))}}  
\frac{full^{\Upsilon}_{2b_{\ell(\mu)}-1}(2kb_{\ell(\mu)})!}{full^{\Upsilon}_{2b_{\ell(\mu)}}
(k(2b_{\ell(\mu)}-1))!}
\prod_{j=1}^{\ell(\mu)} \frac{(-1)^{2b_j-1}}{(2kb_j)!} 
full^{\Upsilon}_{2b_j} (x-1)^{b_j-1} =  \nonumber \\
&& \ \sum_{\mu \vdash n+1} \sum_{(2b_1, \ldots, 2b_{\ell(\mu)}) \in \mathcal{B}_{2\mu,(2(n+1))}} 
\binom{k(2n+1)}{2kb_1,\ldots,2kb_{\ell(\mu)-1},
k(2b_{\ell(\mu)}-1)} \times \nonumber \\
&& \ \ \ \ (x-1)^{b_{\ell(\mu)}-1} 
full^{\Upsilon}_{2b_{\ell(\mu)}-1}
\prod_{j=1}^{\ell(\mu)-1}  full^{\Upsilon}_{2b_j}(x-1)^{b_j-1}.
\end{eqnarray}

As before, we must  give a combinatorial interpretation to
(\ref{3ris2}).  First we interpret  \\
$\binom{k(2n+1)}{2kb_1,\ldots,2kb_{\ell(\mu)-1},
k(2b_{\ell(\mu)}-1)}$ as the number of 
ways to pick a set partition of $\{1,\ldots, k(2n+1) \}$ into 
sets $S_1, \ldots, S_{\ell(\mu)}$ 
where $|S_j|=2kb_j$ for $j=1, \ldots, \ell(\mu)-1$ and 
$|S_{\ell(\mu)}|=k(2b_{\ell(\mu)}-1)$. 
For each set $S_j$ with $j < \ell(\mu)$, 
we interpret $full^{\Upsilon}_{2b_j}$ as 
the number of ways to arrange the numbers in $S_j$ into a 
$k \times 2b_j$ array such that there is an $\Upsilon$-match 
starting at positions $1, \ldots, 2b_j-1$. 
We interpret $full^{\Upsilon}_{2b_{\ell(\mu)}-1}$ as
arranging the numbers in $S_{\ell(\mu)}$  in a
$k \times (2b_{\ell(\mu)}-1)$ array such that there is an $\Upsilon$-match 
starting at positions $1, \ldots, 2b_{\ell(\mu)}-2$. In 
this case, we imagine that the array fills the first 
$2b_{\ell(\mu)}-1$ columns of the brick $2b_{\ell(\mu)}$ and the last 
column is left blank. 
Finally, we interpret $\prod_{j=1}^{\ell(\mu)}
(x-1)^{b_j-1}$ as all ways of picking labels of all the even numbered columns 
of each
brick, except the final even numbered column in the brick, with either an $x$ or a $-1$. For
completeness, we label the final even numbered column 
in each brick with $1$. We
call all such objects created in this way \emph{filled labeled
brick tabloids} and let $\mathcal{K}^{(2),\Upsilon}_{2n+2,k}$ 
denote the set of all 
filled labeled brick tabloids that arise in this way.  Thus, a $C
\in \mathcal{K}^{(2),\Upsilon}_{2n+2,k}$ consists of a brick tabloid, $T$,
a filling, $F \in \mathcal{F}_{2n+1,k}$, and a labeling, $L$, of the 
even numbered columns of $T$
with elements from $\{x,1,-1\}$ such that
\begin{enumerate}
\item all bricks have even size, 
\item for all but the final brick, $F$ has an $\Upsilon$-match starting 
at each column that is not the final column of its brick, 
\item for the final brick, the last column is empty and  
$F$ has an $\Upsilon$-match starting at each column that 
is not one of the  final two columns of that brick,   
\item the final column of each
brick is labeled with $1$, and 
\item each even numbered column that is not a final even numbered 
column of a brick is labeled with $x$ or $-1$.
\end{enumerate}
We then define the weight of $C$, $w(C)$, to be the product of all
the $x$ labels in $L$ and the sign of $C$, $sgn(C)$, to be
the product of all the $-1$ labels in $L$. For example,
if $n =12$, $k=2$, $\Upsilon =St_{2^2}$, 
and $T =(4,2,4,2)$, then Figure \ref{figure:3fris1}
pictures such a filled labeled brick tabloid  
$C \in \mathcal{K}^{(2),\Upsilon}_{14,2}$ with
$w(C) = x^2$ and $sgn(C) =-1$.

Thus
\begin{equation}\label{3ris4}
(k(2n+1))!\Gamma^{(2)}(p_{2n+2,\nu}) = 
\sum_{C \in \mathcal{K}^{(2),\Upsilon}_{2n+2,k}}
sgn(C) w(C).
\end{equation}

\fig{3fris1}{A $C \in \mathcal{K}^{(2),St_{2^2}}_{14,2}$.}

At this point, we can follow the same set of steps as in 
theorem Theorem \ref{thm:EAx} to prove 
\begin{equation}\label{3fris8}
(k(2n+1))!\Gamma^{(2)}(p_{2n+2,\nu}) = \sum_{F \in \mathcal{F}^{(2)}_{2n+1,k}}x^{\Upsilon\mbox{-mch}^{(2)}(F)}.
\end{equation}
That is, we can define the weight-preserving sign-reversing involution
$I:\mathcal{K}^{(2),\Upsilon}_{2n+2,k} \rightarrow 
\mathcal{K}^{(2),\Upsilon}_{2n+2,k}$ in exactly the same way 
as in Theorem \ref{thm:EAx} and use it to prove (\ref{3fris8}) 
because the fact that the last cell is empty does not effect 
any of the arguments that we used in Theorem \ref{thm:EAx}. 

Applying $\Gamma^{(2)}$ to the identity (\ref{pvu1}) and 
using (\ref{etimesnu}) and (\ref{3fris8}), we obtain 

\begin{eqnarray*}
\sum_{n \geq 1} \Gamma^{(2)}(p_{n,\nu})t^n
&=& \sum_{n\geq 0} \frac{t^{2n+2}}{(k(2n+1)!} 
\sum_{F \in \mathcal{F}^{(2)}_{2n+1,k}}x^{\Upsilon\mbox{-mch}^{(2)}(F)} \\
&=& \frac{\sum_{n \geq 1} (-1)^{2n-1} \nu(2n) \Gamma^{(2)}(e_{2n}) t^{2n}}
{1+ \sum_{n \geq 0} (-1)^{2n} \Gamma^{(2)}(e_{2n}) t^{2n}} \\
&=& \frac{\sum_{n \geq 1} (-1)^{2n-1}\frac{(-1)^{2n-1}(x-1)^{n-1}}{(k(2n-1)!)}full^{\Upsilon}_{2n-1}t^{2n}}
{1 - \sum_{n\geq 1}\frac{(x-1)^{n-1}t^{2n}}{(2kn)!}full^{\Upsilon}_{2n}} \\
&=&
\frac{\sum_{n \geq 1} \frac{(x-1)^{n-1}}{(k(2n-1))!}full^{\Upsilon}_{2n-1}t^{2n}}{1 - \sum_{n\geq
1}\frac{(x-1)^{n-1}t^{2n}}{(2kn)!}full^{\Upsilon}_{2n}}.
\end{eqnarray*}
Dividing the above equation by $t$ will then yield 
\begin{equation*}
\sum_{n\geq 0} \frac{t^{2n+1}}{(k(2n+1)!} 
\sum_{F \in \mathcal{F}^{(2)}_{2n+1,k}}x^{\Upsilon\mbox{-mch}^{(2)}(F)}  =
\frac{\sum_{n \geq 1} \frac{(x-1)^{n-1}t^{2n-1}}{(k(2n-1))!}full^{\Upsilon}_{2n-1}}{1 - \sum_{n\geq
1}\frac{(x-1)^{n-1}t^{2n}}{(2kn)!}full^{\Upsilon}_{2n}} 
\end{equation*}
which proves (\ref{eq:3Fris}).

Putting $x=0$ in (\ref{eq:3Fris}) gives 
us the generating function for the number of $\Upsilon$-alternating elements 
of $\mathcal{F}_{2n+1,k}$. That is, if $\Upsilon$ be a subset of $\mathcal{F}_{2,k}$, then 
\begin{equation}\label{eq:EvAlt0}
\sum_{n \geq 0} \frac{Alt^{\Upsilon}_{2n+1}t^{2n+1}}{(k(2n+1)!)!} =
 \frac{\sum_{n \geq 1} \frac{(-1)^{n-1}t^{2n-1}}{(k(2n-1))!}full_{2n-1}}{1+ \sum_{n \geq 1} \frac{(-1)^{n}t^{2n}}{(2kn)!}
full^{\Upsilon}_{2n}}.
\end{equation}

\section{The proof of Theorem \ref{thm:N}.}

In this section we provide arguments similar to those
in~\cite[Sect. 4]{K} to prove Theorem \ref{thm:N}. 
That is, suppose that $\Upsilon \subseteq
\mathcal{F}_{j,k}$. Recall that
\begin{equation}\label{defNU1}
N_k^{\Upsilon}(t) = \sum_{n \geq 0} \frac{t^n}{(kn)!}
\sum_{F \in \mathcal{F}_{n,k}} 
x^{\Ulap{F}},
\end{equation}
and
\begin{equation}\label{AU1}
A_k^{\Upsilon}(t) = \sum_{n \geq 0} \frac{t^n}{(kn)!}A_{n,k}^{\Upsilon}
\end{equation}
where $A_{n,k}^{\Upsilon}$ is the number of 
$F \in \mathcal{F}_{n,k}$ with no $\Upsilon$-matches. 
Our goal is to prove that 
\begin{equation}\label{eq:N2}
N^{\Upsilon}(x,t) = \frac{A^{\Upsilon}(t)}{1-x(1+(\frac{t}{k!}-1)A^{\Upsilon}(t))}.
\end{equation}

Let $\mathcal{E}^{\Upsilon}_{n,k}$ denote the set of all
$F \in \mathcal{F}_{n,k}$ such that $F$ has exactly one
$\Upsilon$-match and it occurs at the end of $F$, i.e. 
the unique $\Upsilon$-match in $F$ starts
at position $n-j+1$. We then let 
$E_{n,k}^{\Upsilon} = |\mathcal{E}^{\Upsilon}_{n,k}|$ and 
\begin{equation}\label{Bdef}
B_k^{\Upsilon}(t) = \sum_{n \geq 1} \frac{E_{n,k}^{\Upsilon}t^n}{(kn)!}.
\end{equation}

\begin{lemma}\label{lem01} Suppose that $\Upsilon \subseteq
\mathcal{F}_{j,k}$. Then 
$B_k^{\Upsilon}(t)=1+ \left(\frac{t}{k!} -1\right)
A_k^{\Upsilon}(t)$.
\end{lemma}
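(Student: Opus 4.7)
The plan is to reduce this generating function identity to a single coefficient-wise recurrence and then establish that recurrence by a direct combinatorial argument. Comparing coefficients of $t^n/(kn)!$, the claimed identity $B_k^{\Upsilon}(t) = 1 + \left(\frac{t}{k!} - 1\right) A_k^{\Upsilon}(t)$ is equivalent to the equality
\[
E_{n,k}^{\Upsilon} + A_{n,k}^{\Upsilon} = \binom{kn}{k} A_{n-1,k}^{\Upsilon} \qquad (n \geq 1),
\]
together with the boundary value $A_{0,k}^{\Upsilon} = 1$. The first step in my proof is to carry out this translation, using $\binom{kn}{k} = (kn)!/(k!\,(k(n-1))!)$ to recognize $\tfrac{t}{k!}A_k^{\Upsilon}(t) = \sum_{n \geq 1} \tfrac{t^n}{(kn)!} \binom{kn}{k} A_{n-1,k}^{\Upsilon}$ and then rearranging.

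Next I would prove the displayed identity combinatorially. The right-hand side counts pairs consisting of a choice of $k$ values from $\{1,\dots,kn\}$ (which, placed in increasing order, fill the final column of some $F \in \mathcal{F}_{n,k}$) together with an arrangement of the remaining $k(n-1)$ values in the first $n-1$ columns such that the reduction $\red{F[1,\dots,n-1]}$ has no $\Upsilon$-match. Since reduction preserves the relative order of entries, $\binom{kn}{k}A_{n-1,k}^{\Upsilon}$ is exactly the number of $F \in \mathcal{F}_{n,k}$ whose first $n-1$ columns contain no $\Upsilon$-match.

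The third step is to show that this set splits as $A_{n,k}^{\Upsilon} + E_{n,k}^{\Upsilon}$. Since $\Upsilon \subseteq \mathcal{F}_{j,k}$ with $j \geq 2$, a $\Upsilon$-match in $F$ starting at position $i$ uses columns $i,\dots,i+j-1$ and hence lies entirely within the first $n-1$ columns precisely when $i \leq n-j$. Thus, among positions $1,\dots,n-j+1$ where a match could begin, all except the terminal position $n-j+1$ are forbidden by the hypothesis on $F[1,\dots,n-1]$. Consequently an eligible $F$ either has no $\Upsilon$-match at all, contributing $A_{n,k}^{\Upsilon}$, or has its unique $\Upsilon$-match at position $n-j+1$, contributing $E_{n,k}^{\Upsilon}$. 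These two cases are disjoint and exhaust the possibilities, yielding the recurrence.

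I do not expect any serious obstacle; the main subtlety is simply to be careful about which column indices a length-$j$ match occupies so that the dichotomy "no match anywhere" versus "exactly one match at position $n-j+1$" is airtight. A quick boundary check at $n=1$ gives $E_{1,k}^{\Upsilon} = 0$, $A_{1,k}^{\Upsilon} = 1$, and $\binom{k}{k} A_{0,k}^{\Upsilon} = 1$, confirming consistency, and the combinatorial identity then immediately produces the generating function formula upon reassembly.
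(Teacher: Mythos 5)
Your proposal is correct and is essentially the paper's own argument: the paper proves the same recurrence $\binom{k(n+1)}{k}A_{n,k}^{\Upsilon}=A_{n+1,k}^{\Upsilon}+E_{n+1,k}^{\Upsilon}$ by the same device (appending a $k$-subset as a new last column to a match-free filling and observing the resulting filling has either no match or a single match at the end), then multiplies by $t^{n+1}/(k(n+1))!$ and sums, which is your coefficient comparison read in the other direction. Your explicit accounting of which starting positions a length-$j$ match can occupy is a slightly more careful justification of the dichotomy the paper calls ``easy to see,'' but it is the same proof.
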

\begin{proof}
Suppose that $F \in \mathcal{F}_{n,k}$.  For any set 
$S \subseteq \{1, \ldots, k(n+1)\}$ of size $k$, let $F^S$ be 
the $k \times (n+1)$ array so that the last column consists 
of the elements of $S$ in increasing order, reading from bottom to top,  
and the first $n$ columns is a $k \times n$ array, $G$, on 
the elements of $\{1, \ldots, k(n+1)\}-S$ such that $red(G) =F$. 
For example, if $S=\{4,6,9\}$  and $F \in \mathcal{F}_{4,3}$ pictured 
on the left in Figure \ref{figure:FS}, then $F^S$ is pictured on 
the right.

\fig{FS}{An example of $F^S$.}

It is easy to see that for any set 
$S \subseteq \{1, \ldots, k(n+1)\}$ of size $k$, if $F$ has no $\Upsilon$-matches then either $F^S$ has 
no $\Upsilon$-matches or $F^S \in \mathcal{E}_{n+1,k}^{\Upsilon}$. 
It follows that
\begin{equation}\label{A=A+B}
\binom{k(n+1)}{k} A_{n,k}^{\Upsilon} = A_{n+1,k}^{\Upsilon} + 
E_{n+1,k}^{\Upsilon}
\end{equation}
If we multiply both sides of (\ref{A=A+B}) by $\frac{t^{n+1}}{(k(n+1))!}$ and
sum for $n \geq 0$, we get that
\begin{equation*}
\frac{t}{k!} A_k^{\Upsilon}(t) = A_k^{\Upsilon}(t)-1 + B_k^{\Upsilon}(t)
\end{equation*}
or that
\begin{equation*}
B_k^{\Upsilon}(t) = 1+ \left( \frac{t}{k!} -1\right) A_k^{\Upsilon}(t).
\end{equation*}
\end{proof}

Suppose that $F \in \mathcal{F}_{n,k}$ and 
$\Ulap{F}=i\geq 0$. One can read any such
$F$ from left to right making a cut right after
the first $\Upsilon$-match. Say the first $\Upsilon$-match
ends in column $i_1$. Then starting at column $i_1+1$ one 
can continuing reading left to right and make another 
cut after the first $\Upsilon$-match that one encounters. 
Continuing on in this way, 
one obtains $i$ fillings  $F_1, \ldots, F_i$ that have exactly one $\Upsilon$-match and
that $\Upsilon$-match occurs at the end of the filling  
and is possibly followed by a filling, $F_{i+1}$, with no $\Upsilon$-matches. In terms
of generating functions, this says that
\begin{eqnarray}\label{eq:N3}
N_k^{\Upsilon}(x,t) &=& A_k^{\Upsilon}(t) +
xB_k^{\Upsilon}(t) A_k^{\Upsilon}(t)+
(x B_k^{\Upsilon}(t))^2 A_k^{\Upsilon}(t)+\cdots  \nonumber \\
&=&\frac{A_k^{\Upsilon}(t)}{1-x B_k^{\Upsilon}(t)}.
\end{eqnarray}
Thus (\ref{eq:N2}) immediately follows by substituting the expression 
for $B_k^{\Upsilon}(t)$ given in Lemma~\ref{lem01} into (\ref{eq:N3}).

\section{Computing $full_{n}^{\Upsilon}$}

In this section, we show how to compute 
$full_{n}^{\Upsilon}$ for various $\Upsilon \subseteq \mathcal{F}_{2,k}$.

In the case $k =2$, we can compute $full_{n}^{\Upsilon}$ for all 
$\Upsilon$ that are subsets of the set of 
standard tableaux of shape $2^2$. We clearly  
have only 3 choices for $\Upsilon$ in that case, namely, 
$\Upsilon_0 =\{P_1^{(2,2)}\}$ and $\Upsilon_1 = \{P_2^{(2,2)}\}$, and 
  $\Upsilon_2 = \{P_1^{(2,2)},P_2^{(2,2)}\}$, where $P_1^{(2,2)}$ and 
$P_2^{(2,2)}$ are pictured in Figure \ref{figure:2standard}.   

For $\Upsilon_0$, it is easy to see that 
$F \in Full_{n}^{\Upsilon_0} = Full_{n}^{P_1^{(2,2)}}$ if and only if 
$F(1,i) =2i-1$ and $F(2,i) =2i$ for 
all $i$ so that $full_{n}^{P_1^{(2,2)}} =1$ for all $n \geq 2$.  
It is also easy to see that 
$F \in Full_{n}^{\Upsilon_2}$ if and only if $F$ is a standard 
tableau of shape $n^2$, so by the hook formula for the number 
of standard tableaux of shape $2^2$, 
$$ full_{n}^{\Upsilon_2}  = \frac{(2n)!}{n! (n+1)!} = 
\frac{1}{n+1} \binom{2n}{n} = C_n$$
where $C_n$ is $n$-th Catalan number. 

For $\Upsilon_1$, we have the following theorem.

\begin{theorem}
For all $n \geq 2$, $full_{n}^{P_2^{(2,2)}} = C_{n-1}$, 
where $C_n = \frac{1}{n+1} \binom{2n}{n}$ is the $n$-th Catalan number.
\end{theorem}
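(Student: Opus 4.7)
My plan is to convert the count into a familiar Catalan family. For $F\in\mathcal{F}_{n,2}$ write $a_i=F(1,i)$ and $b_i=F(2,i)$, so $a_i<b_i$ by column-strictness. The pattern $P_2^{(2,2)}$ has relative order $(\text{bottom-left})<(\text{bottom-right})<(\text{top-left})<(\text{top-right})$, so a $P_2^{(2,2)}$-match starting at column $i$ is equivalent to the four-term chain $a_i<a_{i+1}<b_i<b_{i+1}$. Hence $F\in Full_n^{P_2^{(2,2)}}$ iff this chain holds for every $i=1,\ldots,n-1$. By transitivity this forces $a_1<\cdots<a_n$ and $b_1<\cdots<b_n$, so $F$ is automatically a standard Young tableau of shape $(n,n)$, and the extra data beyond the SYT conditions are the constraints $a_{i+1}<b_i$ for $i=1,\ldots,n-1$.

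Next I would encode $F$ as a lattice path $s\in\{U,D\}^{2n}$ by letting $s_j=U$ if the entry $j$ appears in the bottom row of $F$ and $s_j=D$ if it appears in the top row. The SYT condition on $F$ is exactly the Dyck-path condition that every prefix of $s$ has at least as many $U$'s as $D$'s. In this encoding, $b_i$ is the position of the $i$-th $D$-step and $a_{i+1}$ is the position of the $(i+1)$-th $U$-step, so the constraint $a_{i+1}<b_i$ translates to: at the moment the $i$-th $D$-step is taken, at least $i+1$ $U$-steps have already occurred. Equivalently, the path sits at height at least $1$ immediately after that $D$-step. Imposing this for each $i=1,\ldots,n-1$ says exactly that the Dyck path returns to height $0$ only at the very last step.

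Such ``elevated'' Dyck paths are precisely those of the form $U\,\pi\,D$ where $\pi$ is an arbitrary Dyck path of length $2(n-1)$; removing the first $U$ and the last $D$ gives a bijection with all Dyck paths of length $2(n-1)$, of which there are $C_{n-1}$. This yields $full_n^{P_2^{(2,2)}}=C_{n-1}$. The only step requiring genuine care is the translation of the local inequality $a_i<a_{i+1}<b_i<b_{i+1}$ into the height-at-least-one condition on the encoded path; once that dictionary is in place the Catalan count is immediate.
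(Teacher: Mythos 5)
Your proof is correct and follows essentially the same route as the paper: both encode a filling by the lattice path recording which row each of $1,\ldots,2n$ occupies, and both identify $Full_n^{P_2^{(2,2)}}$ with the Dyck paths via the observation that the chain $a_i<a_{i+1}<b_i<b_{i+1}$ forces the path to stay strictly positive until the final step. Your phrasing in terms of elevated Dyck paths of length $2n$ (stripping the forced first $U$ and last $D$) is a slightly cleaner packaging of the same bijection the paper builds with its map $\Theta$ on paths of length $2(n-1)$.
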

\begin{proof}
We shall give a bijective proof of this theorem by 
constructing a bijection from $\mathcal{D}_{n-1}$, which is the set 
of Dyck paths of length $2(n-1)$, onto $Full_n^{P_2^{(2,2)}}$. 
A Dyck path  of length $2n$ is a path that starts at (0,0) and ends at (2n,0) 
and consists of either {\em up-steps} (1,1) or {\em down-steps} (1,-1) in such a way that the 
path never goes below the $x$-axis.  It is well known that 
the number of Dyck paths of length $2n$ is equal to $n$-th Catalan 
number, $C_n = \frac{1}{n+1}\binom{2n}{n}$.

Now suppose that $F \in Full_n^{P_2^{(2,2)}}$.
Because there is a $P_2^{(2,2)}$-match starting in column $i$
for $i=1, \ldots, n-1$, it follows that $F(1,i)<F(1,i+1)<F(2,i)<F(2,i+1)$
for $i=1, \ldots, n-1$. Thus 
$F(1,1) < \cdots < F(1,n)$ and $F(2,1) < \cdots < F(2,n)$. Now since
$F(1,1) < F(1,2) < F(2,1)< F(2,2)$ and $F(n-1,1) <F(n,1)<F(2,n-1) < F(2,n)$, it 
follows that we must have that $F(1,1)=1$, $F(1,2)=2$, $F(2,n-1) = 2n-1$, and $F(2,n) =2n$. 

Let $\mathcal{R}_n$ be the set of fillings, 
$F$, of the $2 \times n$ rectangle with the integers $1, \ldots, 2n$ 
such that if the elements 
of $F$ in the first row, reading from left to right, are $a_1, \ldots a_n$,
and the elements 
of $F$ in the second row, reading from left to right, are $b_1, \ldots b_n$,
then (i) $a_1 < \cdots < a_n$, (ii) $a_1 =1$ and 
$a_2 =2$, (iii)  $b_1 < \cdots < b_n$, and (iv) $b_{n-1} = 2n-1$ and $b_n =2n$. 
Let $\mathcal{E}_{n-1}$ be the  set 
of paths of length $2(n-1)$ 
that consist of either {\em up-steps} (1,1) or {\em down-steps} (1,-1) such that the 
first step goes from $(0,0)$ to $(0,1)$ and the last step 
goes from $(2n-3,1)$ to $(2n-2,0)$.  Here we do not require 
that the paths in $\mathcal{E}_{n-1}$ stay above the $x$-axis.

There is a simple bijection, $\Theta$, which maps  
$\mathcal{E}_{n-1}$ onto $\mathcal{R}_n$.
Given a path $P = (p_1, \ldots, p_{2n-2}) \in \mathcal{E}_{n-1}$, we label the segments $p_2, \ldots,p_{2n-3}$ with the numbers 
$3, \ldots, 2n-2$, as pictured in 
Figure \ref{figure:P22bij}.  We then create a filling, 
$F = \Theta(P) \in \mathcal{R}_n$, by processing the elements  
$i \in \{3, \ldots, 2n-3\}$ in order by 
putting $i$ in the first available cell in the first row of $F$ if $p_{i-1}$ is an up-step and 
putting $i$ in the first available cell in the second row of $F$ if $p_{i-1}$ is an down-step.
See Figure \ref{figure:P22bij} for an example.

\fig{P22bij}{An example of the map $\Theta$.} 

It is easy to see that $\Theta(P)$ will be an element of 
$\mathcal{R}_n$ since there are always $n-2$ up steps and $n-2$ down 
steps in $p_2, \ldots, p_{2n-3}$. Clearly, the process is reversible, so 
$\Theta$ is a bijection.  Thus all we have to prove 
is that $P \in \mathcal{D}_{n-1}$ if and only if $\Theta(P) \in 
Full_{n}^{P_2^{(2,2)}}$. Now suppose that $P = (p_1, \ldots, p_{2n-2}) \in 
\mathcal{D}_{n-1}$ and $F = \Theta(P)$.  We know that 
$F(1,1) = 1$, $F(1,2) =2$ and $2 < F(2,1) < F(2,2)$. Thus there 
is a $P_2^{(2,2)}$-match that starts in the first column of $F$. Now fix 
$2\leq j \leq n-1$. We shall show that there is a $P_2^{(2,2)}$-match starting in 
column $j$ of $F=\Theta(P)$. Suppose that $p_i$ is the $j$-th up-step 
of $P$, reading from left to right. Thus, $F(1,j+1)=i$ and the right-hand endpoint 
of $p_i$ must have height at least $1$, so there can be at most $j-1$ down-steps among 
$p_1, \ldots, p_i$. Thus, no more than $j-1$ cells of 
the second row of $F$ are filled with numbers less than $i$, while the 
first $j+1$ cells of the first row of $F$ are filled with numbers that 
are less than or equal to $i$. This means that 
$F(2,j-1)<F(1,j+1)=i< F(2,j)$ which implies that $F(1,j)<F(1,j+1)<F(2,j)<F(2,j+1)$. 
Hence there is a $P_2^{(2,2)}$-match starting in 
column $j$ of $F$. Thus $\Theta(P) \in Full_n^{P_2^{(2,2)}}$. 

Next suppose that $P = (p_1, \ldots, p_{2n-2}) \in \mathcal{E}_{n-1} - \mathcal{D}_{n-1}$. 
Then $P$ does not stay above the $x$-axis, so there 
is a smallest $i$ such that the right-hand endpoint of $p_i$ is
at level  $-1$.  It follows that for some $k \geq 2$, there must 
be $k$ down-steps and $k-1$ up-steps in $p_1, \ldots, p_i$,
so $i = 2k-1$. This means $F(2,k)=i+1=2k$ and the elements $1, \ldots, 2k$ occupy the 
first $k$ columns of $F= \Theta(P)$.  But this means 
that $F(1,k+1)>F(2,k)$, so there is not 
a $P_2^{(2,2)}$-match starting in column $k$ of $F$.  

Thus the map $\Theta$ restricts to a bijection between 
$\mathcal{D}_{n-1}$ and $Full_n^{P_2^{(2,2)}}$. Hence 
$full_n^{P_2^{(2,2)}} = C_{n-1}$ as claimed.

\end{proof}

Our next goal is to classify 
those standard tableaux, $P\in St_{2^k}$, such 
that $full_n^P=1$.  To this end, we say that a standard
tableau, $P\in St_{2^k}$, is 
{\em degenerate} if at least one of  
$P(i,1)+1 = P(i+1,1)$ or $P(i,2)+1 = P(i+1,2)$ holds for 
each $1 \leq i < k$. 
Then we have the following theorem. 

\begin{theorem}\label{degenerate}
Suppose $P\in St_{2^k}$ is a standard tableau 
where $k \geq 2$. Then 
$full_n^P =1$ for all $n \geq 1$ if and only if $P$ is degenerate.
\end{theorem}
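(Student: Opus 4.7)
\emph{Proof plan.} The plan is to show that $F \in Full_n^P$ corresponds to a linear extension of a natural partial order $\prec$ on the cells of the $k \times n$ grid, and then to characterize when this partial order is total. First, any $F \in Full_n^P$ is a standard Young tableau of shape $n^k$: columns increase by definition of $\mathcal{F}_{n,k}$, and rows increase because $P \in St_{2^k}$ has $P(r,1) < P(r,2)$, which forces $F(r,i) < F(r,i+1)$ from the $P$-match at columns $i, i+1$. Each $P$-match is equivalent to the pairwise comparisons $F(r,i) < F(r',i+1) \Leftrightarrow P(r,1) < P(r',2)$, so the $P$-matches across consecutive column pairs generate a partial order $\prec$ with $(r,i) \prec (r+1,i)$ and $(r,i) \prec (r',i+1)$ iff $P(r,1) < P(r',2)$. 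Then $F \in Full_n^P$ corresponds bijectively to a linear extension of $\prec$, and $full_n^P = 1$ iff $\prec$ is total.

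The key lemma translates degeneracy into a disjointness statement. Set $s(r) = P(r,1) - r$ and $t(r) = P(r,2) - r$, both non-decreasing. Writing the word that records the column of each entry of $P$ (in increasing order of entry) as $1^{a_1} 2^{b_1} \cdots 1^{a_m} 2^{b_m}$ with $a_l, b_l \geq 1$, a direct calculation shows that $s$ takes exactly the values $\{0\} \cup B$ where $B = \{b_1, b_1 + b_2, \ldots, b_1 + \cdots + b_{m-1}\}$, and $t$ takes exactly the values $A \cup \{k\}$ where $A = \{a_1, a_1 + a_2, \ldots, a_1 + \cdots + a_{m-1}\}$. The degeneracy condition at row transition $i < k$ is exactly $i \notin A$ or $i \notin B$, so $P$ is degenerate iff $A \cap B = \emptyset$. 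Since $0 \notin A \cup \{k\}$ and $k \notin \{0\} \cup B$, this is equivalent to the value sets of $s$ and $t$ being disjoint.

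For the forward direction, assume $P$ is degenerate. I will show $\prec$ is total by induction on column distance $d \geq 1$: for every $(r, c)$ and $(r', c+d)$ the cells are $\prec$-comparable, and there is a threshold $\theta_d(r') \in A \cup \{k\}$ with $(r, c) \prec (r', c+d)$ iff $r \leq \theta_d(r')$. The base $d = 1$ uses the $P$-match with $\theta_1(r') = t(r')$. For $d \geq 2$, consider a chain $(r, c), (r'', c+1), (r', c+d)$: the first step is $\prec$ iff $r'' > s(r)$, and by induction the second is $\prec$ iff $r'' \leq \theta_{d-1}(r')$. By column-strictness in column $c+1$ both conditions are monotone in $r''$, so a closing chain in one direction or the other exists iff $s(r) \neq \theta_{d-1}(r')$. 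This holds because $s(r) \in \{0\} \cup B$ and $\theta_{d-1}(r') \in A \cup \{k\}$ are in disjoint sets by the lemma. Setting $\theta_d(r') = |\{r : s(r) < \theta_{d-1}(r')\}|$ and writing $\theta_{d-1}(r') = A_l$, the count of $s$-values less than $A_l$ equals a partial sum of the $a_{l'}$'s, namely $A_{l_* + 1}$ where $l_*$ is the largest index with $B_{l_*} < A_l$ (well-defined since $B_0 = 0 < A_l$, and $A \cap B = \emptyset$ guarantees strict inequalities). Hence $\theta_d(r') \in A \cup \{k\}$, closing the induction and giving $full_n^P = 1$.

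For the reverse direction, assume $P$ is non-degenerate, so by the lemma some $r, r'$ satisfy $s(r) = t(r') = u$. I claim $(r, 1)$ and $(r', 3)$ are $\prec$-incomparable in the $k \times 3$ grid. Any chain between them must cross columns via a cell in column $2$; a $\prec$-chain through $(r'', 2)$ requires $r'' > s(r) = u$ and $r'' \leq t(r') = u$ simultaneously, which has no solution (the opposite direction fails symmetrically). Using $s(r) < r$ (immediate from the Dyck condition on the word of $P$) one rules out the row-increase alternative $(r, 1) \prec (r, 2) \prec (r', 3)$ (which would need $r \leq t(r') = u < r$), and using $r' \leq u$ (from the set characterization of $t$, via $A_{l''} \geq B_{l''}$) rules out $(r, 1) \succ (r', 2) \succ (r', 3)$; combining these with column-strictness in columns $1$ and $3$ confirms incomparability. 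Hence $\prec$ admits at least two linear extensions and $full_3^P \geq 2$. The main obstacle throughout is the inductive bookkeeping in paragraph three establishing $\theta_d(r') \in A \cup \{k\}$, which relies on the fine structure of the run decomposition of $P$ provided by the key lemma.
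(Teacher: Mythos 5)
Your proposal is correct, and although it shares the paper's underlying framework --- identifying $Full_n^P$ with the linear extensions of the partial order on the cells of the $k\times n$ grid generated by column-strictness and the cross-column comparisons forced by each $P$-match, so that $full_n^P=1$ exactly when that order is total --- the way you decide totality is genuinely different. The paper superimposes the successor graph $G_P$ to form $G_{P,n}$, prunes redundant edges via Lemma \ref{lem:remove}, and runs a four-way case analysis on the pairs of edges leaving a row of $G_P$ to show that for degenerate $P$ every vertex of the pruned graph has outdegree $1$; for non-degenerate $P$ it builds $\binom{s+t}{s}\ge 2$ explicit fillings by redistributing the entries of an interval $(y,z)$ between column $1$ and columns $3,\dots,n$. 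You instead encode $P$ by the run decomposition $1^{a_1}2^{b_1}\cdots 1^{a_m}2^{b_m}$ of its reading word, observe that degeneracy is precisely disjointness of the value sets $\{0\}\cup B$ and $A\cup\{k\}$ of $s(r)=P(r,1)-r$ and $t(r)=P(r,2)-r$ (this checks out: $i\notin A$ iff $P(i,1)+1=P(i+1,1)$ and $i\notin B$ iff $P(i,2)+1=P(i+1,2)$), and then prove totality by induction on column distance with thresholds $\theta_d(r')\in A\cup\{k\}$, the disjointness being exactly what prevents the tie $s(r)=\theta_{d-1}(r')$ that would break comparability; non-degeneracy instead yields $s(r)=t(r')$ and an incomparable pair $(r,1)$, $(r',3)$, hence at least two linear extensions. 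Your invariant is sharper --- it says exactly which cells are comparable at each column distance and makes the equivalence with degeneracy a one-line arithmetic fact --- while the paper's argument is more pictorial and produces the extra fillings explicitly. Two small points: both arguments tacitly use that the generated relation is acyclic (so that linear extensions exist and an incomparable pair really gives $full_3^P\ge 2$); the paper merely asserts this for $G_{P,n}$, and you should note it too. Also, in your reverse direction the separately treated ``alternatives'' are already subsumed by the observation that any chain joining $(r,1)$ and $(r',3)$ must contain some $(r'',2)$, forcing the contradictory inequalities $r''>u$ and $r''\le u$ (or their reverses), so that extra casework can be dropped.
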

\begin{proof}
Fix $P\in St_{2^k}$. To help us visualize the order relationships 
within $P$, we form a directed graph $G_P$ on the cells of the
$k \times 2$ rectangle by drawing a directed edge 
from the position of the number $i$ to the position 
of the number $i+1$ in $P$ for $i =1, \ldots, 2n-1$.  For example, in 
Figure \ref{figure:unique}, 
the $P\in St_{2^5}$ pictured on the left, results 
in the directed graph $G_P$ pictured 
immediately to its right. Then $G_P$ determines the order relationships between all the cells in $P$
since $P(r,s)<P(u,v)$ if there is a directed 
path from cell $(r,s)$ to cell $(u,v)$ in $G_P$. 
Now suppose that $F \in Full_n^P$ where $n \geq 3$. Because  
there is a $P$-match starting in column $i$, we can superimpose 
$G_P$ on the cells in columns $i$ and $i+1$ to determine the order 
relations between the elements in the those two columns. If we do 
this for every pair of columns, $i$ and $i+1$ for $i =1, \ldots, n-1$, 
we end up with a directed graph on the cells of 
the $k \times n$ rectangle which we will call $G_{P,n}$. 
For example, Figure \ref{figure:unique2},
$G_{P,4}$ is pictured just to the right of $G_P$.  It is then 
easy to see that if $F \in Full_n^P$ and there is a directed 
path from cell $(r,s)$ to cell $(u,v)$ in $G_{P,n}$, then 
it must be the case that $F(r,s) < F(u,v)$. Note that $G_{P,n}$ will always be a directed acyclic 
graph with no  multiple edges.

In general, the problem of computing $full_n^P$ for a column 
strict tableau $P$ of shape $2^k$ can be reduced to finding 
the number of linear extensions of a certain poset. 
That is, we claim the graph $G_{P,n}$ induces a poset 
$\mathcal{W}_{P,n} = (\{(i,j): 1 \leq i \leq k\ \& \ 1 \leq j \leq n\}, <_W)$ 
on the cells of the 
$k \times n$ rectangle by defining $(i,j) <_W (s,t)$ if 
and only if there is path from $(i,j)$ to $(s,t)$ in $G_{P,n}$. 
It is easy to see that $<_W$ is transitive. Since $G_{P,n}$ 
is acyclic, we can not have $(r,s) <_W (r,s)$. Thus 
$\mathcal{W}_P(n)$ is poset. It follows that there 
is a 1:1 correspondence between the elements of $Full_n^P$ 
and the linear extensions of $\mathcal{W}_{P,n}$. That is, 
if $F \in Full_n^P$, then it is easy to see 
that $(a_1,b_1), \ldots, (a_{kn},b_{kn})$ where $F(a_i,b_i)= i$ is 
a linear extension of $\mathcal{W}_{P,n}$. Vice versa, if 
$(a_1,b_1), \ldots, (a_{kn},b_{kn})$ is a linear extension of 
$\mathcal{W}_{P,n}$, then we can define 
$F$ so that $F(a_i,b_i)=i$ and it will automatically be the case 
that $F \in Full_n^P$. Since every poset has at least one 
linear extension, it follows that $Full_n^P \neq \emptyset$ 
for all $n \geq 2$. Thus $full_n^P \geq 1$ for all $n \geq 1$.

Our next goal is 
to apply a simple lemma on directed acyclic geraphs with no  multiple 
edges to replace $G_{P,n}$ by a simpler acyclic directed graph 
which contains the same information about the relative order of the 
elements in $F$.  Given  a directed acyclic graph $G = (V,E)$ 
with no multiple edges, let $Con(G)$ equal the set 
of all pairs $(i,j) \in V \times V$ such that there is a directed 
path in $G$ from vertex $i$ to vertex $j$.  Then we have the following.

\begin{lemma} \label{lem:remove}
Let $G = (V,E)$ be a directed acyclic graph 
with no multiple edges. Let $H$ be the subgraph of $G$ that 
results by removing all edges $e=(i,j) \in E$ such that 
there is a directed path from $i$ to $j$ in $G$ that does not 
involve $e$. Then $Con(G) = Con(H)$. 
\end{lemma}
\begin{proof}
The proof will be by induction on the number, $k$, of edges 
$e=(i,j) \in E$ such that 
there is a directed path from $i$ to $j$ in $G$ that does not 
involve $e$. If $k =0$, there is nothing to prove. 

If $k > 0$, let $e=(i,j) \in E$ be an edge such that 
there is a directed path, 
$$P = (i=v_1,v_2, \ldots, v_s =j),$$ 
from $i$ to $j$, in $G$, that does not 
involve $e$. Let $H^*$ be the directed graph that results from $G$ by 
removing edge $e$. Now suppose that $(x,y) \in Con(G)$. 
If there is a directed path from $x$ to $y$ in $G$ that does not involve 
$e$, then clearly $(x,y) \in Con(H^*)$. Otherwise, suppose that 
$(x=w_1, \ldots, w_t =y)$ is a directed path from $x$ to $y$ in $G$, for some $x,y\in V$, that 
uses the edge $e$. Thus for some $1\leq r <t$, we have $w_r =i$ and 
$w_{r+1} =j$. But then $(w_1, \ldots, w_r,v_2,\ldots,v_{s-1},w_{r+1}, 
\ldots, w_t)$ is a path in $H^*$ which connects $x$ and $y$ in $H^*$. 
Thus it follows that $H^*$ is a connected acyclic graph with 
no multiple edges such that $Con(G) = Con(H)$.  However, 
$H^*$ has $k-1$ edges, $f=(m,n) \in E-\{e\}$, such that 
there is a directed path from $m$ to $n$ in $H^*$ that does not 
involve $f$. Thus by induction, we can remove all such edges 
from $H^*$ to obtain an $H$ with $Con(G) = Con(H^*) = Con(H)$.
\end{proof}

Now suppose that we apply Lemma \ref{lem:remove} to the graph 
$G_{P,4}$ in Figure \ref{figure:unique} to produce $H_{P,4}$. We know that 
for any $((r,s),(u,v)) \in Con(G_{P,4})$,  it must be the case that $F(r,s) < F(u,v)$ 
for all $F \in Full_4^P$. Thus since $Con(G_{P,4})=Con(H_{P,4})$, it 
must be that case that whenever $((r,s),(u,v)) \in Con(H_{P,4})$, 
$F(r,s) < F(u,v)$  for all $F \in Full_4^P$. Now the directed edge 
$((1,2),(2,2))$ is not in $H_{P,4}$ since $G_{P,4}$ has the directed path 
$((1,2),(4,1),(5,1),(2,2))$.  Also, 
directed edge $((3,2),(4,2))$ is not in $H_{P,4}$
 because of the directed path $((3,2),(1,3),(4,2))$. 
Similarly, the corresponding 
edges in the third column of $G_{P,4}$ will be removed when creating $H_{P,4}$, so 
$H_{P,4}$ is the directed graph on the cells of the $5 \times 4$ rectangle pictured 
on the far right of Figure \ref{figure:unique}.   Observe that in this case, 
$H_{P,4}$ has the property that outdegree of each vertex is 1  
which means that $H_{P,4}$ determines a total order on the entries of any $F \in Full_4^P$. Thus, there is exactly one $F \in Full_4^P$, which means that $full^P_4 =1$ in this case.

\fig{unique}{$P$, $G_P$, $G_{P,4}$, and $H_{P,4}$.}

We should note, however, that it is not always be the case 
that applying Lemma \ref{lem:remove} to remove 
edges from $G_{P,n}$ will produce a graph, 
$H_{P,n}$, where each vertex has outdegree 1. For example, 
consider the standard tableau $Q$ pictured on the left in Figure 
\ref{figure:unique2}.  The graph $G_Q$ is pictured immediately 
to the right $Q$ and the graph $G_{Q,4}$ is pictured immediately 
to the right of $G_Q$.  The directed edge $((1,2),(2,2))$ is not 
in $H_{Q,4}$ due to the directed path of 
length 2, $((1,2),(4,1),(2,2))$, in $G_{Q,4}$.  Also, the directed 
edge $((4,2),(5,2))$ is not in $H_{Q,4}$  
since $G_{Q,4}$ has the directed path 
$((4,2),(2,3),(3,3),(5,2))$.  
Similarly, the corresponding 
edges in the third column of $G_{Q,4}$ are eliminated when creating $H_{Q,4}$ from 
$G_{Q,4}$.  It is then easy to check that  $H_{Q,4}$ is 
the directed 
graph on the cells of the $5 \times 4$ rectangle pictured 
on the far right of Figure \ref{figure:unique2}. Note that in 
this case, cells $(3,3)$ and $(4,3)$ have outdegree 2. In fact, 
it is not difficult to see that there are exactly four $F \in 
Full^Q_4$ so that $full^Q = 4$ in this case. 

\fig{unique2}{$Q$, $G_Q$, $G_{Q,4}$, and $H_{Q,4}$.}

Let $P \in \mathcal{F}_{2,k}$ be a standard tableau. For all $n \geq 2$, 
let $H_{P,n}$ be the acyclic directed graph that arises 
from $G_{P,n}$ by removing all edges $e=((r,s),(u,v))$ in $G_{P,n}$ such that 
there is a directed path from $(r,s)$ to $(u,v)$ in $G_{P,n}$ that does 
not involve $e$. By Lemma \ref{lem:remove}, we know that 
$H_{P,n}$ imposes the same order relations on any 
$F \in Full^P_n$ as were imposed by $G_{P,n}$.

Suppose that $P$ is a degenerate standard tableau in 
$St_{2^k}$. We claim the outdegree of any vertex in $H_{P,n}$ is 1, which means that 
$H_{P,n}$ imposes a total order on the cells in the $k \times n$ rectangle. Thus, 
there is only one $F \in Full^P_n$ and, hence, $full_n^P =1$ 
for all $n \geq 1$. 
Let $E = E(G_P)$ be the set of directed edges in the 
graph $G_P$ and $E_n = E(G_{P,n})$ be the set of directed edges in 
$G_{P,n}$.  First observe the out degree of any cell 
in column 1 or column $n$ of $G_{P,n}$ is 1 since $G_P$ is superimposed 
only once in column 1 and column $n$. However, for 
any column $c$ with $2 \leq c \leq n-1$, $G_P$ is superimposed 
on columns $c-1$ and $c$ and on columns $c$ and $c+1$ so 
the outdegree of each cell in column $c$ could be 2. Next, observe that since 
$P$ is a standard tableau, we always have 
that $P(i,1) < P(i,2)$ and the elements in 
the columns of $P$ are strictly 
increasing from bottom to top. This means that there are three possibilities 
for an edge in $E$ coming out of cell $(i,1)$ for $i \leq n-1$, namely, it can 
go to cell $(i+1,1)$, to cell $(i,2)$, or to some cell $(g,2)$, where 
$g < i$.  Similarly there are only two possibilities 
for an edge in $E$ coming out of cell $(i,2)$ for $i \leq n-1$, namely, it can 
go to cell $(i+1,2)$ or it can go to cell $(j,1)$, where $j > i$. 
Since $P$ is degenerate, we must have at least 
one of the edges $((i,1),(i+1,1))$ and $((i,2),(i+1,2))$ in $G_P$, so 
in fact, we only have 4 possible cases for pairs of edges that leave cells 
$(i,1)$ and $(i,2)$, as pictured in Figure 
\ref{figure:possible}.

\fig{possible}{Possible edges in $G_P$ for a degenerate standard tableau, $P$.}

\ \\
{\bf Case H.} Edges $((i,1),(i+1,1))$ and $((i,2),(i+1,2))$ are in $G_P$.\\
In this case, the outdegree of cell $(i,u)$ in column $u$ of $H_{P,n}$ is 1 because  $((i,u),(i+1,u))$ is the only edge out of each $(i,u)$ in $G_{P,n}$.

\ \\
{\bf Case U.}  Edges $((i,1),(i+1,1))$ and $((i,2),(j,1))$ are in $G_P$ for some $j>i+1$. \\
In this case, since $P(i,2)+1=P(j,1)$ and $P(i+1,2)>P(i,2)$, 
$G_P$ contains the path, $((i,2),(j,1),\ldots,(l,1),(i+1,2))$, for some $l\geq j$, 
which passes through all the cells in the first column from $(j,1)$ to $(l,1)$.
Now consider $G_{P,n}$, as shown in Figure \ref{figure:CaseB}. 
For each interior column, $2\leq u\leq n-1$, $G_{P,n}$ contains the paths 
$((i,u),(i+1,u))$ and $((i,u),(j,u-1),\cdots,(l,u-1),(i+1,u))$.
Therefore, $H_{P,n}$ omits the edge $((i,u),(i+1,u))$ 
in all but its first column, leaving the single edge, $((i,u),(j,u-1))$, 
exiting each cell in row $i$.

\fig{CaseB}{Edges in $G_{P,n}$ in Case U.} 

\ \\
{\bf Case X.} Edges $((i,1),(i,2))$ and $((i,2),(i+1,2))$ are in $G_P$. \\
This case never occurs. Since $P$ increases along rows and columns, 
this case would require $P(i,1)<P(i+1,1)<P(i+1,2)$ while maintaining
$P(i,1)+2=P(i,2)+1=P(i+1,2)$, which does not have an integer 
solution unless $P(i+1,1)=P(i,2)$, which is impossible in $P$.

\ \\
{\bf Case D.} Edges $((i,1),(g,2))$ and $((i,2),(i+1,2))$ are in $G_P$ for some $g < i$. \\
In this case, since $P(i,1)+1=P(g,2)$ and $P(i+1,1)>P(i,1)$, 
$G_P$ contains the path, $((i,1),(g,2),\cdots,(h,2),(i+1,1))$, for some $g\leq h<i$, which passes 
through all the cells in the second column from $(g,2)$ to $(h,2)$.
Now consider $G_{P,n}$, as shown in Figure \ref{figure:CaseD}. 
For each interior column, $2\leq u\leq n-1$, $G_{P,n}$ contains the paths 
$((i,u),(i+1,u))$ and $((i,u),
(g,u+1),\cdots,(h,u+1),(i+1,u))$.
Therefore, $H_{P,n}$ omits the edge $((i,u),(i+1,u))$ in all but its
last column, leaving a single edge, $((i,u),(g,u+1))$, exiting each cell 
in row $i$.

\fig{CaseD}{Edges in $G_{P,n}$ in Case D.} 

\noindent
This shows that the outdegree of each vertex in $H_{P,n}$ is 1 as desired.

Now suppose that $P$ is a standard tableau of shape 
$2^k$ that is not degenerate.  We will show 
that $full_n^P \geq 2$.  Let $i< k$ be the smallest $i$
such that neither $P(i,1) + 1 = P(i+1,1)$ nor 
$P(i,2) + 1 = P(i+1,2)$.  Thus, $G_P$ has the edges 
$((i,1),(g,2))$ and $((i,2),(j,1))$, 
where $g\leq i<j$. 
Fix $n \geq 3$. We know $Full_n^P \neq \emptyset$ so that 
we fix some $F \in Full_n^P$.

Since $F$ is column strict, all the elements in the second column 
below cell $(i,2)$ are less than $y=F(i,2)$ and all the elements in the 
second column above cell $(i+1,2)$ are greater than $z=F(i+1,2)$.
 That is, 
we have the situation pictured in Figure \ref{figure:CaseII}.

\fig{CaseII}{$G_{P,n}$ when $P$ is not degenerate.}

Let $S$ be the elements in the first column of $F$ which 
are in the interval $(y,z)$ and let $T$ be the elements of $F$ in columns 
$3, \ldots, n$ which are in the interval $(y,z)$. Since 
$F$ has a $P$-match in the first two columns, we 
know that $F(j,1)$ must be an immediate successor of 
$F(i,2)=y$ relative to the elements of the first two columns of $F$ 
so that $y < F(j,2) <z$.  Similarly, since $F$ has a $P$-match in 
columns 2 and 3, we 
know that $F(g,3)$ must be an immediate successor of 
$F(i,2)=y$ relative to the elements of in columns 2 and 3 of $F$ 
so that $y < F(g,3) <z$. Thus we know that $S$ and $T$ are non-empty. 
Let $s =|S|$ and $t=|T|$. It follows that the interval 
$(y,z)$ consists of $s$ elements in 
the first column and $t$ elements in columns $3,\ldots,n$ of $F$.  
Next suppose that we pick any set of $S_1$ of $s$ elements of 
from $S \cup T$. We can then we modify the positions of the elements 
in the interval $(y,z)$ in $F$ by placing the elements 
of $S_1$ in column 1 in the positions that were occupied by $S$ in column 
1 of $F$ in the same relative order as the elements 
of $S$ in column 1 of $F$ and placing the elements 
of $T_1 = (S \cup T) - S_1$ in the positions that were occupied by $T$ in 
columns $3, \ldots, n$  of $F$ in the same relative order as the elements 
of $T$ in columns $3, \ldots, n$ of $F$. Call this modified filling $F_{S_1}$. 
Because we only changed the positions of elements in 
the interval $(y,z)$ in $F$, it is easy to see that 
$F_{S_1}$ will be an element of 
$Full_n^P$. 
Thus, $Full_n^P$ contains at least 
$\binom{s+t}{s}$ elements. Since $s,t \geq 1$, 
$\binom{s+t}{s} \geq 2$ and, hence,  $full_n^P \geq 2$ for  all $n \geq 3$.
\end{proof}

Now that we have classified the standard
tableaux, $P$, of shape $2^k$ such that 
$full_n^P =1$ for all $n \geq 1$, we will determine the number of
such column strict tableaux. Let $Deg_{2,k}$ be the set 
degenerate column strict tableaux of shape $2^k$. 
A Motzkin path 
of length $n$ is a lattice path on the integer lattice in the 
plane that runs from $(0,0)$ to $(n-1,0)$ consisting of 
up-steps $U =(1,1)$, down-steps $D =(1,-1)$, and horizontal-steps 
$H=(1,0)$ so that the path never passes below the $x$-axis,   
We let $\mathcal{M}_n$ 
denote the set of Motzkin paths of length $n$. If we are allowed 
to color the horizontal-steps of the paths $M \in \mathcal{M}_n$ 
 with one of $k$ colors, then 
we obtain the set of $k$-colored Motzkin paths of length $n$. We 
let $\mathcal{M}^{(k)}_n$ denote the set of $k$-colored 
Motzkin paths. For $\mathcal{M}^{(2)}_n$, we shall denote 
the 2-colored horizontal-steps as $H$ and $\widetilde{H}$.  
It was proved in \cite{DS}, \cite{ST} that 
$|\mathcal{M}^{(2)}_n| =C_{n+1}$  where $C_n$ is the $n$-th 
Catalan number. By the hook  
formula, we know that  $|St_{2^n}| = C_n$. Thus 
there exists a bijection from the set of standard
tableaux of shape $2^n$ onto the set of $2$-colored 
Motzkin paths of length $n-1$. We claim that we can 
define such a bijection $\Gamma: St_{2^n} 
\rightarrow \mathcal{M}^{(2)}_{n-1}$  for $n \geq 2$ as follows. 
Given a column strict tableau $P\in St_{2^n}$,  
define $\Gamma(P)$  to be the path 
$M = (m_1, \ldots, m_{n-1})$ such that $m_i$ equals 
$$\left\{\begin{array}{cl}
U & \textrm{if }G_P\textrm{ contains }(i,1)\rightarrow (i+1,1)\textrm{ and }(i,2)\rightarrow (j,1)\textrm{ for some } j > i+1, \\
D & \textrm{if }G_P\textrm{ contains }(i,1)\rightarrow (g,2)\textrm{ and }(i,2)\rightarrow (i+1,2)\textrm{ for some } g <i, \\
H & \textrm{if }G_P\textrm{ contains }(i,1)\rightarrow (i+1,1)\textrm{ and }(i,2)\rightarrow (i+1,2), \textrm{ and} \\
\widetilde{H} & \textrm{if }G_P\textrm{ contains }(i,1)\rightarrow (g,2)\textrm{ and }(i,2)\rightarrow (j,1)\textrm{ for some }g\leq i\textrm{ and }j>i.
\end{array}\right.$$

Note that we have not used the edge, $e$, out of cell $(n,1)$ in 
this definition. We must first check that $M$ is a 2-colored Motzkin path. 
This is equivalent to showing 
that (i) there are an equal number of $U$'s and $D$'s, i.e., the path ends at height $0$, 
and (ii) we don't encounter more $D$'s than $U$'s in $m_1, \ldots, m_i$ for any $i$. 

In order to verify condition (i), we notice that if $m_i= U$, 
both edges out of the cells in row $i$ are directed toward the first column in $G_P$, 
whereas if $m_i =D$, both edges out of the cells in row $i$ are directed toward the 
second column in $G_P$.
If $m_i$ equals $H$ or $\widetilde{H}$, then there is an edge directed toward each column out of the cells of row $i$ in $G_P$.
Thus if there are $a$ $U$'s, $b$ $D$'s, and $c$ $H$'s and $d$ 
$\widetilde{H}$'s in $M$, then these account for $2a+c+d$ edges directed toward the first column 
in $G_P$ and 
$2b+c+d$ edges directed toward the second column in $G_P$.
The total in-degree of the first column is $n-1$ since every 
entry except $(1,1)$ has an edge of $G_P$ coming into it. 
Similarly, the total in-degree of the second column is $n$.
Since $M$ accounts for every edge in $G_{P}$ except $e$, which is directed towards 
the second column, it must be the case that $n-1=2a +c +d= 2b+c+d$. Hence   
$a=b$ and there are an equal number of $U$'s and 
$D$'s in $M$. Thus $M$ starts at (0,0) and ends at $(n-1,0)$. 

If condition (ii) fails, then let $i$ be the position of 
the first step to end beneath the $x$-axis, i.e. $m_i =D$ and 
there are an equal number of $U$'s and $D$'s 
among $m_1, \ldots, m_{i-1}$.  By our previous argument, there must be  
$i-1$ edges directed into second column from the cells in rows $1, \ldots, i-1$. 
Since edges that go from the first column to the second 
column go weakly downward and edges from the second column to 
the second column can go up by at most one, it follows that 
the edges that hit the second column from cells in rows $1, \ldots ,i-1$ 
must all end somewhere in the first $i$ rows. Now suppose 
that $(g,2)$, with $g \leq i$, is a cell that is not the endpoint of a directed 
edge from cells in rows $1, \ldots, i-1$.  It cannot be that 
$g=i$ since this would force $G_P$ to have edges $((i,1),(i,2))$ 
and $((i,2),(i+1,2))$. But this means that 
$P(i,2) = 1+P(i,1)$ and $P(i+1,2) = P(i,1)+2$ which is 
impossible since we must have have $P(i,1) < P(i+1,1) < P(i+1,2)$. 
Thus we must assume that $g < i$. Since 
$m_i$ is a down-edge and the edge that goes into cell $(i,2)$ can 
only have come from cell $(i-1,2)$, it must be the case that 
$G_P$ has edge $((i,1),(g,2))$. 
In this case, we must have the situation pictured in 
Figure \ref{figure:deg1}. That is, because $P$ is a column strict 
tableau, arrows that go from column 1 to column 2 cannot 
cross each other.  Hence all the cells of the form $(h,2)$ where 
$g < h \leq i$ must have been hit by vertical arrows.  But 
this is impossible because then $\{P(i,1),P(g,2),P(g+1,2), \ldots 
P(i,2),P(i+1,2)\}$ would be a set of consecutive elements so 
it would be impossible to have $P(i,1) < P(i+1,1) < P(i+1,2)$, as 
is required for $P$ to be a column strict tableau.

\fig{deg1}{The case where $((i,1),(g,2)) \in E(G_P)$ and $g < i$.}

Since we know that $C_n = |St_{2^n}| =  |\mathcal{M}^{(2)}_{n-1}|$, 
to prove that $\Gamma$ is bijection, we need only 
show that $\Gamma$ is 1:1.  Let $P_1$ and $P_2$ be 
two distinct column strict tableaux of shape $2^n$.  Let $s$ be 
the largest element such that $1, \ldots, s$ are in the same 
cells in both $P_1$ and $P_2$ and suppose that $s$ is in 
cell $(i,b)$.  Then without loss of generality, we can assume that $s+1$ is in column 1 
in $P_1$ and in column 2 in $P_2$. But this means that 
the arrow out of $(i,b)$ goes to column 1 in $G_{P_1}$ and to column 
2 in $G_{P_2}$, which implies that the $i$-th step 
in $\Gamma(P_1)$ is not equal to the $i$-th step in $\Gamma(P_2)$, 
so $\Gamma(P_1) \neq \Gamma(P_2)$.  Thus we have proved the following 
theorem.

\begin{theorem} For all $n \geq 2$, 
$\Gamma: St_{2^n} \rightarrow \mathcal{M}^{(2)}_{n-1}$
 is a bijection.
\end{theorem}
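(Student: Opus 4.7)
The plan is to show that $\Gamma$ is a well-defined injection from $St_{2^n}$ into $\mathcal{M}^{(2)}_{n-1}$, and then to conclude bijectivity by a cardinality argument. Both sets have size $C_n$: the left side by the Frame--Robinson--Thrall hook formula applied to the shape $2^n$, and the right side by the result of \cite{DS,ST} cited in the excerpt. Since an injection between finite sets of equal cardinality is automatically a bijection, this reduces the theorem to the two verifications below.

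For well-definedness, I need to check that for every $P \in St_{2^n}$ the word $\Gamma(P) = (m_1, \ldots, m_{n-1})$ is an actual 2-colored Motzkin path. Let $a, b, c, d$ denote the number of $U$, $D$, $H$, $\widetilde{H}$ steps respectively. First I would establish $a = b$, which is equivalent to the path ending at height zero. Every cell of $P$ other than $(1,1)$ has exactly one incoming edge in $G_P$, so column $1$ receives $n-1$ incoming edges and column $2$ receives $n$. The unique edge $e$ leaving cell $(n,1)$ is directed into column $2$ and is not recorded by $\Gamma$; the remaining $2(n-1)$ edges are exactly the two edges associated to each $m_i$. Reading off the four cases in the definition of $\Gamma$, these land in the two columns with multiplicities $2a + c + d$ and $2b + c + d$, and setting both equal to $n-1$ forces $a = b$.

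The subtler part of well-definedness is that the path stays weakly above the $x$-axis. Here I would argue by contradiction: let $i$ be the first position at which the path ends below zero, so that $m_i = D$ and $m_1, \ldots, m_{i-1}$ contain equally many $U$'s and $D$'s. Then exactly $i-1$ of the edges leaving rows $1, \ldots, i-1$ are directed into column $2$. Because arrows from column $1$ to column $2$ are weakly decreasing and arrows within column $2$ ascend by exactly one, the heads of these $i-1$ edges lie among $(1,2), \ldots, (i,2)$, leaving one cell $(g,2)$ with $g \le i$ unhit. A short case split rules both possibilities out: $g = i$ forces $P(i,2) = P(i,1)+1$ and $P(i+1,2) = P(i,1)+2$, which is incompatible with $P(i,1) < P(i+1,1) < P(i+1,2)$; and $g < i$ forces $(g,2), \ldots, (i+1,2)$ to contain a block of consecutive integers starting with $P(i,1)+1$, again leaving no room for $P(i+1,1)$ between $P(i,1)$ and $P(i+1,2)$.

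Finally, for injectivity, suppose $P_1 \ne P_2$ in $St_{2^n}$, and let $s$ be the largest integer whose position is the same in both tableaux, occupying some cell $(i,b)$. Then $s+1$ occupies different columns in the two tableaux, so the arrows leaving $(i,b)$ in $G_{P_1}$ and $G_{P_2}$ head to different columns. The four-case definition of $\Gamma$ records exactly the target columns of the arrows leaving row $i$, so the $i$-th step of $\Gamma(P_1)$ is forced to differ from the $i$-th step of $\Gamma(P_2)$. The main obstacle is the non-negativity portion of well-definedness, since it is the one place where the non-crossing structure of the column-$1$-to-column-$2$ arrows in a column-strict tableau enters in a non-trivial way; the in-degree bookkeeping and the first-difference argument for injectivity are both routine by comparison.
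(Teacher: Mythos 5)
Your proposal is correct and follows essentially the same route as the paper: the same in-degree count ($2a+c+d = 2b+c+d = n-1$) to show the path ends at height zero, the same contradiction via an unhit cell $(g,2)$ with $g\le i$ (split into $g=i$ and $g<i$, using the non-crossing of column-$1$-to-column-$2$ arrows) for non-negativity, the same first-difference argument for injectivity, and the same cardinality count $|St_{2^n}| = |\mathcal{M}^{(2)}_{n-1}| = C_n$ to conclude bijectivity.
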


Note that $P \in St_{2^n}$ is degenerate if and only 
if there are no $\widetilde{H}$'s in $\Gamma(P)$.  Thus we have 
the following corollary. 
\begin{corollary}
For all $n \geq 2$, $|Deg_n| = M_{n-1}$ where $M_n$ is the 
number of Motzkin paths of length $n$. 
\end{corollary}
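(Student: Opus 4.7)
The plan is to leverage the bijection $\Gamma: St_{2^n} \to \mathcal{M}^{(2)}_{n-1}$ constructed just before the corollary, and show that $\Gamma$ restricts to a bijection from $Deg_n$ to the uncolored Motzkin paths $\mathcal{M}_{n-1}$. The note immediately preceding the corollary already asserts that $P \in St_{2^n}$ is degenerate if and only if $\Gamma(P)$ contains no $\widetilde{H}$-step, so the real task is to verify this claim carefully from the definition of $\Gamma$, and then to observe that $\mathcal{M}^{(2)}_{n-1}$ minus any $\widetilde{H}$-steps is literally $\mathcal{M}_{n-1}$.

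To carry out the verification, I would fix $1 \leq i \leq n-1$ and analyze which of the four step-types $\Gamma$ assigns to position $i$ in terms of the pair of edges of $G_P$ leaving the cells $(i,1)$ and $(i,2)$. Inspecting the four cases in the definition of $\Gamma$: the types $U$, $D$, $H$ each require at least one of the two vertical edges $((i,1),(i+1,1))$ or $((i,2),(i+1,2))$ to appear in $G_P$, i.e.\ at least one of $P(i,1)+1 = P(i+1,1)$ or $P(i,2)+1 = P(i+1,2)$ to hold; whereas the type $\widetilde{H}$ is defined by the presence of edges $((i,1),(g,2))$ with $g \leq i$ and $((i,2),(j,1))$ with $j > i$, which forces both $P(i,1)+1 \neq P(i+1,1)$ and $P(i,2)+1 \neq P(i+1,2)$. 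Thus position $i$ is a $\widetilde{H}$-step of $\Gamma(P)$ if and only if the degeneracy condition fails at row $i$. Summing over $i$, $P$ is degenerate if and only if $\Gamma(P)$ has no $\widetilde{H}$-step.

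Finally, the subset of $\mathcal{M}^{(2)}_{n-1}$ consisting of paths with no $\widetilde{H}$-step is naturally identified with the set $\mathcal{M}_{n-1}$ of ordinary Motzkin paths of length $n-1$ simply by forgetting the (now trivial) color of each horizontal step. Combining this with the restriction of $\Gamma$ gives a bijection $Deg_n \to \mathcal{M}_{n-1}$, proving $|Deg_n| = M_{n-1}$.

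The main obstacle is just the bookkeeping in the case analysis above: one has to be a bit careful with the boundary cell $(n,1)$, whose outgoing edge is not used by $\Gamma$, but this causes no issue because the degeneracy condition only involves indices $1 \leq i < k = n$, matching exactly the indices $1 \leq i \leq n-1$ at which $\Gamma$ produces its steps. Once this alignment is confirmed, no further work is required beyond citing the preceding theorem.
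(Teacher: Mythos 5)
Your proposal is correct and follows exactly the paper's route: the paper proves the corollary by observing that $P \in St_{2^n}$ is degenerate if and only if $\Gamma(P)$ contains no $\widetilde{H}$-steps, so the bijection $\Gamma$ restricts to a bijection from the degenerate tableaux onto the ordinary Motzkin paths of length $n-1$. Your case analysis of the four step types is just a careful spelling-out of that same observation, and it is accurate (in the $\widetilde{H}$ case neither vertical edge leaves row $i$, and conversely, since each of $(i,1)$ and $(i,2)$ has a unique outgoing edge in $G_P$ whose only non-vertical options are the ones listed in the $\widetilde{H}$ clause).
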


We say that two column strict tableaux $P,Q \in \mathcal{F}_{2,k}$ 
are {\em consecutive-Wilf equivalent} ($c$-Wilf equivalent) if 
$A^P(t) = A^Q(t)$ and we say that $P$ and $Q$ are 
{\em strongly $c$-Wilf equivalent} if $D^P(t,x) = D^Q(t,x)$. 
Clearly, it follows from Theorem \ref{thm:D} that 
if $P$ and $Q$ are degenerate column strict tableaux of 
shape $2^k$, then $P$ and $Q$ are strongly $c$-Wilf equivalent. 

We end this section with a few remarks about how we have 
derived formulas for $full_n^P$ for certain column strict tableaux $P$. 
First there are some simple equivalences which help 
us reduce the number of $P \in St_{2^k}$ that we 
have to consider.  Given any $F \in \mathcal{F}_{n,k}$, 
we define the generalized complement of $F$, $F^{gc}$ as a composition 
of three operations on $F$, $c$ for complement, $f_y$ for flipping 
the diagram about the $y$-axis, and $f_x$ for flipping the diagram 
about the $x$-axis. Here the operation of complement 
on $F$ is the result of replacing each element $i$ in $F$ by 
$kn+1 -i$.  The operation $f_y$ is the operation of 
taking a filling of the $k \times n$ rectangle with 
the numbers $1, \ldots, kn$, and flipping the diagram about 
the $y$-axis and the operation $f_x$ is the operation of 
taking a filling of the $k \times n$ rectangle with 
the numbers $1, \ldots, kn$, and flipping the diagram about 
the $x$-axis. We then define $F^{gc} = f_x\circ f_y \circ c (F)$. 
This process is pictured in Figure \ref{figure:flip}. 

\fig{flip}{The generalized complement of a diagram.}

It is easy to see that $F \in \mathcal{F}_{n,k}$ if and only if 
$F^{gc} \in \mathcal{F}_{n,k}$ and $P \in St_{2^k}$ if and only if 
$P^{gc} \in St_{2^k}$.  Moreover for 
any column strict tableau $P$ of shape $2^k$,  
$F \in \mathcal{F}_{n,k}$ has a $P$-match in columns $i$ and $i+1$  
 if and only if 
$F^{gc}$ has a $P^{gc}$-match in columns $n-i$ and $n+1-i$. 
It follows that $F \in Full_n^P$ if and only if 
$F^{gc} \in Full^{P^{gc}}_n$ so that 
$full_n^P = full_n^{P^{gc}}$. Thus $P$ and $P^{gc}$ are 
c-Wilf equivalent. Now in $\mathcal{F}_{2,3}$, 
the only non-degenerate column strict tableaux that are not complements of each other are the tableaux 
$P$ and $Q$ pictured in Figure \ref{figure:PQ}.

\fig{PQ}{The tableaux $P$ and $Q$ and their corresponding 
Hasse diagrams.}

Recall that in the proof of Theorem \ref{degenerate}, we observed 
that for any standard tableaux $P \in St_{2^k}$, 
the graph $G_{P,n}$ induces a poset 
$\mathcal{W}_{P,n} = (\{(i,j): 1 \leq i \leq k\ \& \ 1 \leq j \leq n\}, <_W)$ 
on the cells of the 
$k \times n$ rectangle by defining $(i,j) <_W (s,t)$ if 
and only if there is path from $(i,j)$ to $(s,t)$ in $G_{P,n}$ and 
that there is a one-to-one correspondence between the 
linear extensions of $\mathcal{W}_{P,n}$ and the elements 
of $Full_n^P$.

Now in the case of the standard tableaux $P$ and $Q$ in 
Figure \ref{figure:PQ}, we have pictured 
$G_{P,5}$ just to the right of $P$ and $G_{Q,5}$ just to the 
right of $Q$.  In the case of $P$, if we eliminate the vertices 
corresponding to (1,1) and (1,2), which must always be the first two elements  
in any linear extension of $\mathcal{W}_P(5)$, and we eliminate 
the vertices in cells (3,4) and (3,5), which must always be the last  
two elements  
in any linear extension of $\mathcal{W}_{P,5}$, we are left 
with the Hasse diagram pictured to its right. In the case of 
$Q$, if we eliminate the vertices corresponding to cells 
(1,1), (2,1), (1,2) and (2,2), which must always be the first four  
elements  
in any linear extension of $\mathcal{W}_{Q,5}$, and we eliminate 
the vertices in cells (3,4) and (3,5),  which must always be the last  
two elements  
in any linear extension of $\mathcal{W}_{Q,5}$, we are left 
with the Hasse diagram pictured to its right. 
We have developed general recursions of the number of linear extension 
of posets whose Hasse diagrams  
are similar to the one pictured for $Q$ in Figure \ref{figure:PQ} 
which have allowed us to show that 
$full_n^Q= \frac{1}{2n+1}\binom{3n}{n}$.  This work will appear in 
subsequent paper. However, we have not been able to find a 
formula for $full_n^P$.

\end{document}